\def\C{{\mathbb C}}
\def\bL{{\mathbb L}}
\def\mD{{\mathcal D}}
\def\Z{{\mathbb Z}}
\def\bH{{\overline H}}
\def\bn{{\overline n}}
\newtheorem{proposition}{Proposition}[section]
\newtheorem{theorem}[proposition]{Theorem}
\newtheorem{lemma}[proposition]{Lemma}
\newtheorem{conjecture}[proposition]{Conjecture}
\newtheorem{remark}[proposition]{Remark}
\title[Quasihomogeneous Hilbert schemes]{The classes of the quasihomogeneous Hilbert schemes of points on the plane}
\thanks{The author is partially supported by the grants RFBR-10-01-00678, NSh-8462.2010.1, the Vidi grant of NWO and by the Moebius Contest Foundation for Young Scientists}
\author{A. Buryak}
\address{Faculty of Mechanics and Mathematics, Moscow State University, 119991 Moscow, Russia and\newline\indent
Department of Mathematics, University of Amsterdam, P.~O.~Box 94248, 1090 GE Amsterdam, The Netherlands}
\email{buryaksh@mail.ru, a.y.buryak@uva.nl}
\keywords{Hilbert scheme, torus action, $q,t$-Catalan numbers}
\subjclass[2010]{14C05, 05A17}
\begin{document}

\begin{abstract}
In this paper we give a formula for the classes (in the Grothendieck ring of complex quasi-projective varieties) of irreducible components of $(1,k)$-quasi-homogeneous Hilbert schemes of points on the plane. We find a new simple geometric interpretation of the $q,t$-Catalan numbers. Finally, we investigate a connection between $(1,k)$-quasi-homogeneous Hilbert schemes and homogeneous nested Hilbert schemes. 
\end{abstract}

\maketitle

\section{Introduction}

The Hilbert scheme $(\C^2)^{[n]}$ of $n$ points in the plane $\C^2$ parametrizes the ideals $I\subset\C[x,y]$ of colength $n$: $dim_{\C}\C[x, y]/I=n$. There is an open dense subset of $(\C^2)^{[n]}$ that parametrizes the ideals associated with configurations of $n$ distinct points. The Hilbert scheme of $n$ points in the plane is a nonsingular, irreducible, quasiprojective algebraic variety of dimension $2n$ with a rich and much studied geometry, see \cite{Gottsche,Nakajima} for an introduction.

The cohomology groups of $(\C^2)^{[n]}$ were computed in \cite{Ellingsrud} and we refer the reader to the papers \cite{Costello,Lehn1,Lehn2,Li,Vasserot} for the description of the ring structure in the cohomology $H^*((\C^2)^{[n]})$. Let $\bn=(n_1,\ldots,n_k)$. The nested Hilbert scheme $(\C^2)^{[\bn]}$ parametrizes $k$-tuples $(I_1,I_2,\ldots,I_k)$ of ideals $I_j\subset\C[x, y]$ such that $I_j\subset I_h$ for $j<h$ and $dim_{\C}\C[x, y]/I_j=n_j$. In \cite{Cheah} J. Cheah studied smoothness and the homology groups of the nested Hilbert schemes $(\C^2)^{[\bn]}$.

There is a $(\C^*)^2$-action on $(\C^2)^{[n]}$ that plays a central role in this subject. The algebraic torus $T=(\C^*)^2$ acts on $\C^2$ by scaling the coordinates, $(t_1,t_2)(x,y)=(t_1x,t_2y)$. This action lifts to the $T$-action on the Hilbert scheme $(\C^2)^{[n]}$.

Let $T_{a,b}=\{(t^a,t^b)\in T|t\in\C^*\}$, where $a,b\ge 1$ and $gcd(a,b)=1$, be a one dimensional subtorus of $T$. Let $(\C^2)^{[n]}_{a,b}$ be the set of fixed points of the $T_{a,b}$-action on the Hilbert scheme $(\C^2)^{[n]}$. The variety $(\C^2)^{[n]}_{a,b}$ is smooth and parameterizes quasi-homogeneous ideals of colength $n$ in the ring $\C[x,y]$. Irreducible components of $(\C^2)^{[n]}_{1,1}$ were described in \cite{Iarrobino} and a description of the irreducible components of $(\C^2)^{[n]}_{a,b}$ for arbitrary $a$ and $b$ was obtained in \cite{Evain}.   

We denote by $K_0(\nu_{\C})$ the Grothendieck ring of complex quasiprojective varieties. The classes of the irreducible components of the Hilbert scheme $(\C^2)^{[n]}_{1,1}$ in $K_0(\nu_{\C})$ were computed in \cite{Iarrobino2}. 

Let $(\C^2)^{[\bn]}_{a,b}$ be the set of fixed points of the $T_{a,b}$-action on the nested Hilbert scheme $(\C^2)^{[\bn]}$. The dimensions of the irreducible components of $(\C^2)^{[(n,n+1)]}_{1,1}$ were computed in \cite{Cheah}. 

In this paper we generalize the result of \cite{Iarrobino2} and give a formula for the classes in $K_0(\nu_{\C})$ of the irreducible components of the variety $(\C^2)^{[n]}_{1,k}$ for an arbitrary positive $k$. As an application, we find an interesting combinatorial identity. We formulate a conjectural formula for the generating series of the classes $\left[(\C^2)^{[n]}_{a,b}\right]$. The combinatorics related to the action of the torus $T_{1,k}$ is very similar to the combinatorics of the $k$-parameter $q,t$-Catalan numbers and we find a new simple geometric interpretation of these numbers.

We also investigate a connection between $(1,k)$-quasi-homogeneous Hilbert schemes and homogeneous nested Hilbert schemes. We construct a natural map $\pi\colon(\C^2)^{[n]}_{1,k}\to (\C^2)^{[\bn]}_{1,1}$. We find a sufficient condition for the restriction of this map to an irreducible component to be an isomorphism. In particular, this condition is satisfied when $\bn=(n+1,n)$. Hence, we generalize the result from \cite{Cheah}, where the dimensions of the irreducible components in this case were computed.  

\subsection{Grothendieck ring of quasi-projective varieties}
Here we recall a definition of the Grothendieck ring $K_0(\nu_{\C})$ of complex quasi-projective varieties. It is the abelian group generated by the classes $[X]$ of all complex quasi-projective varieties $X$ modulo the relations:
\begin{enumerate}
\item if varieties $X$ and $Y$ are isomorphic, then $[X]=[Y]$;
\item if $Y$ is a Zariski closed subvariety of $X$, then $[X]=[Y]+[X\backslash Y]$.
\end{enumerate}  
The multiplication in $K_0(\nu_{\C})$ is defined by the Cartesian product of varieties: $[X_1]\cdot[X_2]=[X_1\times X_2]$. The class $\left[\mathbb A^1_{\C}\right]\in K_0(\nu_{\C})$ of the complex affine line is denoted by $\bL$.

\subsection{Description of the irreducible components of $(\C^2)^{[n]}_{a,b}$}
Let us recall a description of the irreducible components of the variety $(\C^2)^{[n]}_{a,b}$. Let $\C[x,y]^d_{a,b}\subset\C[x,y]$ be the subspace of quasihomogeneous polynomials of degree $d$ with respect to the action of $T_{a,b}$. Let $H=(d_0,d_1,\ldots)$ be a sequence of non-negative integers such that $\sum_{i\ge 0}d_i=n$. Let $(\C^2)^{[n]}_{a,b}(H)\subset(\C^2)^{[n]}_{a,b}$ be the set of points corresponding to quasihomogeneous ideals $I\subset\C[x,y]$ such that $\dim(\C[x,y]^i_{a,b}/(I\cap\C[x,y]^i_{a,b}))=d_i$. 
\begin{proposition}[\cite{Evain}]
If $(\C^2)^{[n]}_{a,b}(H)\ne\emptyset$, then $(\C^2)^{[n]}_{a,b}(H)$ is an irreducible component of $(\C^2)^{[n]}_{a,b}$. 
\end{proposition}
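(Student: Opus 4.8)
\emph{Proof proposal.} The plan is to first exhibit a locally constant discrete invariant that cuts $(\C^2)^{[n]}_{a,b}$ into open-and-closed pieces, and then to show that each such piece is connected; since $(\C^2)^{[n]}_{a,b}$ is smooth, its connected and irreducible components coincide, so this is enough.

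\textbf{Step 1 (the strata are open and closed, hence smooth).} Consider the tautological rank-$n$ vector bundle on $(\C^2)^{[n]}$ whose fibre over $[I]$ is $\C[x,y]/I$, and restrict it to $(\C^2)^{[n]}_{a,b}$, where it carries a fibrewise $T_{a,b}$-action. It splits globally as a direct sum $\bigoplus_{i\ge0}\mathcal A^{(i)}$ of its $T_{a,b}$-weight subsheaves; each $\mathcal A^{(i)}$ is a direct summand of a locally free sheaf, hence locally free, and its rank at $[I]$ equals $d_i=\dim\bigl(\C[x,y]^i_{a,b}/(I\cap\C[x,y]^i_{a,b})\bigr)$, the dimension of the degree-$i$ part of $\C[x,y]/I$. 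Thus $H=(d_0,d_1,\dots)$ is locally constant on $(\C^2)^{[n]}_{a,b}$, so each $(\C^2)^{[n]}_{a,b}(H)$ is a union of connected components of $(\C^2)^{[n]}_{a,b}$. Since $(\C^2)^{[n]}_{a,b}$ is smooth, its connected components are irreducible, so it only remains to prove that each nonempty $(\C^2)^{[n]}_{a,b}(H)$ is connected.

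\textbf{Step 2 (reduction to monomial ideals).} The big torus $T=(\C^*)^2$ preserves $(\C^2)^{[n]}_{a,b}$, and since every $\C[x,y]^i_{a,b}$ is a sum of $T$-weight spaces, $T$ preserves each stratum $(\C^2)^{[n]}_{a,b}(H)$. Choose a one-parameter subgroup $\lambda\colon\C^*\to T$ generic enough that its fixed locus on $(\C^2)^{[n]}$ is exactly the set of monomial ideals — e.g. $\lambda(t)=(t^N,t^{N+1})$ with $N\gg0$. Every $T_{a,b}$-fixed ideal is supported at the origin (the only $T_{a,b}$-fixed point of $\C^2$), so $(\C^2)^{[n]}_{a,b}$ is a closed subvariety of the projective punctual Hilbert scheme, hence projective; therefore $\lim_{t\to0}\lambda(t)\cdot I$ exists for every $I$, lies in the same (closed, $\lambda$-stable) stratum, and is a monomial ideal, and the orbit closure $\overline{\lambda(\C^*)\cdot I}$ is an irreducible curve in $(\C^2)^{[n]}_{a,b}(H)$ joining $I$ to it. Hence $(\C^2)^{[n]}_{a,b}(H)$ is connected once all the monomial ideals it contains lie in a single one of its connected components — equivalently, via the Bialynicki--Birula decomposition for $\lambda$, once the incidence graph of the attracting cells is connected, equivalently once exactly one monomial ideal in the stratum has a $0$-dimensional attracting cell.

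\textbf{Step 3 and the main obstacle.} The remaining, and only substantial, point is to link the monomial ideals inside a fixed stratum. A monomial ideal there is a Young diagram $\mu\vdash n$ whose multiset of $(a,b)$-weights $\{ai+bj:(i,j)\in\mu\}$ is prescribed by $H$; the plan is to connect any such $\mu$ to a fixed minimal one $\mu_0$ by a chain of irreducible curves, each an orbit closure $\overline{\lambda'(\C^*)\cdot I}\subseteq(\C^2)^{[n]}_{a,b}(H)$ for a cleverly chosen, non-generic $\lambda'\subset T$ whose two $T$-fixed points are the two monomial ideals to be linked (constancy of $H$ along such a curve being automatic by Step 1). \textbf{The hard part is Step 3, and it is genuinely combinatorial.} The ``elementary'' linking curves are not produced by naive ``slide a cell along an anti-diagonal of $\mu$'' moves: already for $n=4$ and $H=(1,1,1,1)$ the monomial ideals $(x,y^4)$ and $(y,x^4)$ admit no such move, yet the $\mathbb P^1$ of ideals $(\alpha x+\beta y)+\mathfrak m^4$ links them. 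A convenient reformulation is this: for generic $\lambda$, the dimension of the $\lambda$-attracting cell at $I_\mu$ is the number of $\lambda$-positive weights in the $T_{a,b}$-invariant subspace of $T_{I_\mu}(\C^2)^{[n]}$, whose weights are read off from the arm and leg lengths of the cells of $\mu$; connectedness then amounts to showing that in each stratum exactly one diagram has no such $\lambda$-positive weight — a statement whose combinatorics runs closely parallel to the $q,t$-Catalan combinatorics studied later in the paper, and it is exactly here that one appeals to Evain's classification \cite{Evain}.
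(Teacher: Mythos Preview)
The paper does not give its own proof of this proposition: it is simply quoted from Evain's paper \cite{Evain}, with no argument supplied. So there is no ``paper's proof'' to compare your attempt against.

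Judged on its own, your Steps~1 and~2 are a correct and standard reduction: the $T_{a,b}$-weight decomposition of the tautological bundle makes $H$ locally constant on the smooth fixed locus, and projectivity of the punctual scheme together with a generic one-parameter subgroup lets you connect any point of a stratum to a monomial ideal in the same stratum. That part is fine.

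The problem is Step~3. You correctly identify it as the substantive part --- showing that all monomial ideals with the same $H$ lie in a single connected component --- and you give a nice description of the combinatorial target (uniqueness of the diagram with zero-dimensional attracting cell, or existence of linking $\mathbb P^1$'s), but then you conclude with ``it is exactly here that one appeals to Evain's classification \cite{Evain}.'' That is circular: Evain's classification \emph{is} the proposition you are trying to prove. Everything hard has been pushed into the citation of the very result under discussion, so as written this is an outline of a proof strategy, not a proof. If you want a self-contained argument you must actually carry out the combinatorial linking (or, alternatively, exhibit each stratum as an irreducible variety directly --- e.g.\ via a quiver/flag description of the type used in Section~\ref{subsection:quiver} of the paper, or for $(a,b)=(1,1)$ via Iarrobino's product-of-Grassmannians description).
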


\subsection{Classes of the irreducible components of $(\C^2)^{[n]}_{1,k}$}
In this section we fix $k\ge 1$. For numbers $M,N\ge 0$ let $G(M,N)_q=\frac{\prod_{i=1}^{M+N}(1-q^i)}{\prod_{i=1}^M(1-q^i)\prod_{i=1}^{N}(1-q^i)}$. Let $\eta(H)$ be the largest $i$, such that $d_i=\left[\frac{i}{k}\right]+1$. We adopt the following conventions, $\eta(H)=-1$, if $H=(0,0,\ldots)$; $d_{-1}=0$. We introduce an auxiliary function $\tau$ defined by the following rule, $\tau(i)=1$, if $k\mid i+1$ and $\tau(i)=0$, if $k\nmid i+1$. We will prove the following statement.
\begin{theorem}\label{main theorem}
Let $H=(d_0,d_1,\ldots),n=\sum\limits_{i\ge 0}d_i$. If $(\C^2)^{[n]}_{1,k}(H)\ne\emptyset$, then
\begin{gather*}
\left[(\C^2)^{[n]}_{1,k}(H)\right]=\prod_{i\ge\eta}G(d_i-d_{i+1}+\tau(i),d_{i+1}-d_{i+1+k})_{\bL}.
\end{gather*}
\end{theorem}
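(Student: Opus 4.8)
The plan is to describe what $(\C^2)^{[n]}_{1,k}(H)$ parametrizes, compute its class by a torus decomposition, and reduce the theorem to a combinatorial identity. Write $V_d=\C[x,y]^d_{1,k}$, so $\dim V_d=\left[\frac{d}{k}\right]+1$ and $\tau(i)=\dim\bigl(V_{i+1}/xV_i\bigr)$. First I record that for an ideal $I$ in a nonempty component $(\C^2)^{[n]}_{1,k}(H)$ one has $I\cap V_i=0$ for $i\le\eta$ and $I\cap V_i\neq0$ for $i>\eta$: if $0\neq f\in I\cap V_i$ then $0\neq xf\in I\cap V_{i+1}$, so $\{\,i:I\cap V_i=0\,\}$ is an initial segment of $\Z_{\ge0}$, which by the definition of $\eta$ is exactly $\{0,\dots,\eta\}$. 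Hence a point of $(\C^2)^{[n]}_{1,k}(H)$ is the same as a collection of subspaces $I_i\subset V_i$, $i>\eta$, with $\dim V_i/I_i=d_i$, subject to the ideal conditions $xI_i\subseteq I_{i+1}$ and $yI_i\subseteq I_{i+k}$, i.e.\ $I_{i+1}\supseteq xI_i+yI_{i+1-k}$ (with the convention $I_j=0$ for $j\le\eta$).

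It is tempting to build such a collection degree by degree, choosing $I_{i+1}$ freely among the codimension-$d_{i+1}$ subspaces of $V_{i+1}$ containing $xI_i+yI_{i+1-k}$, which would immediately exhibit $(\C^2)^{[n]}_{1,k}(H)$ as an iterated Grassmann bundle. But the choices are not free: for a partial collection $(I_{\eta+1},\dots,I_m)$ to extend to a full point of the component one needs all the later spaces $xI_j+yI_{j+1-k}$ to have the prescribed codimensions, which is a nontrivial closed condition --- it can, for instance, cut a conic out of a $\mathbb P^2$ of a priori choices. The remedy is a torus decomposition. The variety $(\C^2)^{[n]}_{1,k}(H)$ is smooth, and a generic one-parameter subgroup of $T$ factoring through $T/T_{1,k}$ acts on it with isolated fixed points, namely the monomial ideals of $T_{1,k}$-Hilbert function $H$, and with all attracting limits existing inside the component (for the same reason as on $(\C^2)^{[n]}$ itself, cf.\ Ellingsrud--Str\o mme). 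The Bia\l{}ynicki--Birula decomposition therefore applies, its cells are affine spaces, and in $K_0(\nu_{\C})$
\[
\left[(\C^2)^{[n]}_{1,k}(H)\right]=\sum_{\mu}\bL^{w(\mu)},
\]
the sum over monomial ideals $\mu$ of $T_{1,k}$-Hilbert function $H$ and $w(\mu)$ the number of positive weights of $T_{I_\mu}(\C^2)^{[n]}_{1,k}(H)$. One computes $w(\mu)$ from the standard arm--leg character $\sum_{\square\in\mu}\bigl(t_1^{-l(\square)}t_2^{a(\square)+1}+t_1^{l(\square)+1}t_2^{-a(\square)}\bigr)$ of $T_{I_\mu}(\C^2)^{[n]}$ by retaining the summands on which $T_{1,k}$ acts trivially and counting those of positive weight for the chosen subgroup. (Alternatively one stratifies $(\C^2)^{[n]}_{1,k}(H)$ by the dimensions of all the spaces $xI_j+yI_{j+1-k}$; on each stratum these are constant by construction, the degree-by-degree procedure makes each piece explicitly computable, and the class is the sum over strata.)

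What then remains --- and this is the crux --- is the combinatorial identity
\[
\sum_{\mu}\bL^{w(\mu)}=\prod_{i\ge\eta}G\bigl(d_i-d_{i+1}+\tau(i),\,d_{i+1}-d_{i+1+k}\bigr)_{\bL},
\]
the sum again over monomial ideals $\mu$ of $T_{1,k}$-Hilbert function $H$. Writing each factor as $\binom{d_i+\tau(i)-d_{i+1+k}}{d_{i+1}-d_{i+1+k}}_{\bL}=\sum_{\lambda_i}\bL^{|\lambda_i|}$, the sum over partitions $\lambda_i$ inside the $(d_i-d_{i+1}+\tau(i))\times(d_{i+1}-d_{i+1+k})$ rectangle, I would prove the identity either by induction on the number of nonzero $d_i$ --- comparing the recursion obtained by peeling off the top $(1,k)$-slice of $\mu$ with the obvious recursion for the right-hand side --- or by an explicit weight-preserving bijection between tuples $(\lambda_i)_{i\ge\eta}$ and such monomial ideals, obtained by slicing the Young diagram of $\mu$ along the lines $a+kb=\mathrm{const}$ and recording how consecutive slices interlock. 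This identity, together with the (more routine but necessary) verification that the Bia\l{}ynicki--Birula / stratification argument is valid for this quasi-projective variety, is where essentially all of the work lies; the combinatorics that appears is precisely the one governing the $k$-parameter $q,t$-Catalan numbers discussed in the introduction.
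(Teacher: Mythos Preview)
Your plan matches the paper's approach closely: reduce the class computation to a combinatorial identity via the Bia\l{}ynicki--Birula decomposition, and then prove that identity by a bijective/inductive argument relating Young diagrams with prescribed $(1,k)$-diagonal profile to tuples of partitions in rectangles. The identification of $w(\mu)=|\{s\in D: l(s)=k(a(s)+1)\}|$ and of the remaining combinatorial identity as ``the crux'' is exactly right.

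One point of comparison worth flagging: you expect a direct weight-preserving bijection between monomial ideals of Hilbert function $H$ and unconstrained tuples $(\lambda_i)_{i\ge\eta}$ with $\lambda_i$ in the appropriate rectangle. The paper does construct a bijection $F$ (its ``hook code''), but the image is \emph{not} the full product $\prod_i\mD(f_i,w_i)$: it is cut out by a nontrivial condition $\Phi_P(i)-i\le k$ for all $i$ (Proposition~3.6). The paper then needs a separate, somewhat intricate induction (Section~4) to show that the generating function of this constrained set equals the claimed product of Gaussian binomials; this step is not just bookkeeping, and involves proving an auxiliary identity \eqref{extra identity} simultaneously. So your sketch ``slicing along $a+kb=\mathrm{const}$ and recording how consecutive slices interlock'' captures the spirit of $F$, but the image description and the subsequent matching with $\prod G(\cdot,\cdot)_q$ is where the real combinatorial content lies, and it is less clean than your phrasing suggests. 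Your alternative suggestion of peeling off the top $(1,k)$-slice is in fact close to how the paper's inductive step runs (via Lemma~3.4, which tracks exactly how $F(D)$ changes when the bottom row is removed), so you have correctly anticipated the shape of the argument even if the details are more involved than a one-line recursion.
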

\begin{remark}
We see that the classes of the irreducible components of $(\C^2)^{[n]}_{1,k}$ are polynomials in $\bL$. Moreover, all roots of these polynomials are the roots of unity. In the case of an arbitrary pair $(a,b)$, this is not true. For example, it is easy to compute that  
\begin{gather*}
\left[(\C^2)^{[12]}_{2,3}(1,0,1,1,1,1,2,1,1,1,1,0,1)\right]=1+3\bL+\bL^2.
\end{gather*}
\end{remark}

\subsection{Conjecture}
The following conjectural formula for the generating series of the classes $\left[(\C^2)^{[n]}_{a,b}\right]$ is based on computer calculations.
\begin{conjecture}
\begin{gather*}
\sum_{n\ge 0}\left[(\C^2)^{[n]}_{a,b}\right]t^n=\prod_{\substack{i\ge 1\\(a+b)\nmid i}}\frac{1}{1-t^i}\prod_{i\ge 1}\frac{1}{1-\bL t^{(a+b)i}}.
\end{gather*}
\end{conjecture}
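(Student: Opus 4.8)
The plan is to bypass the stratification by Hilbert functions entirely (whose individual classes, as the Remark shows, need not be products of Gaussian binomials for general $(a,b)$) and instead exploit the residual torus action. The quotient $\C^*\cong T/T_{a,b}$ acts on the smooth variety $(\C^2)^{[n]}_{a,b}$, and a point is fixed by this residual $\C^*$ exactly when it is fixed by all of $T$, i.e.\ when it is a monomial ideal $I_\lambda$, $\lambda\vdash n$. Since $(\C^2)^{[n]}$ is semiprojective for the $T$-action (projective over the affine $\mathrm{Sym}^n\C^2$) and every ideal has a monomial flat limit, the Bialynicki--Birula decomposition for a generic complementary cocharacter $\sigma\subset T$ applies and writes $(\C^2)^{[n]}_{a,b}$ as a disjoint union of affine cells $C_\lambda$. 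Hence in $K_0(\nu_\C)$ one gets $\left[(\C^2)^{[n]}_{a,b}\right]=\sum_{\lambda\vdash n}\bL^{\dim C_\lambda}$. To identify $\dim C_\lambda$ I would use the weight decomposition of $T_{I_\lambda}(\C^2)^{[n]}$ into the arm--leg characters $t_1^{\ell+1}t_2^{-\alpha}$ and $t_1^{-\ell}t_2^{\alpha+1}$ over the boxes $\square\in\lambda$ (with $\alpha=\alpha(\square)$, $\ell=\ell(\square)$ the arm and leg; see \cite{Nakajima,Ellingsrud}), restrict to the $T_{a,b}$-invariant part and count the directions on which $\sigma$ acts positively. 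The surviving directions are the boxes with $a\,\ell(\square)=b(\alpha(\square)+1)$, and since $\gcd(a,b)=1$ this forces $\alpha(\square)+1=am$, $\ell(\square)=bm$, so
\[
\left[(\C^2)^{[n]}_{a,b}\right]=\sum_{\lambda\vdash n}\bL^{w_{a,b}(\lambda)},\qquad
w_{a,b}(\lambda)=\#\{\square\in\lambda:\ \exists\,m\ge 1,\ \alpha(\square)=am-1,\ \ell(\square)=bm\}.
\]
These are precisely the boxes whose hook length $(a+b)m$ is cut by the corner in the ratio $a:b$. I would verify this reduction numerically (e.g.\ for $n\le 6$ and $a+b=3$ it reproduces the conjectured coefficients), since getting the orientation of $\sigma$ right is the only delicate point here.

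The next step is to rewrite the right-hand side. Splitting each partition $\rho$ into its parts divisible by $s:=a+b$ and the remaining parts immediately gives
\[
\prod_{\substack{i\ge 1\\ s\nmid i}}\frac{1}{1-t^i}\prod_{i\ge 1}\frac{1}{1-\bL t^{si}}=\sum_{\rho}\bL^{\,m_s(\rho)}\,t^{|\rho|},
\]
where $m_s(\rho)$ denotes the number of parts of $\rho$ divisible by $s$. Granting the reduction above, the Conjecture becomes the purely combinatorial assertion that for every $n$ the statistics $w_{a,b}$ and $m_s$ are \emph{equidistributed} on partitions of $n$, that is $\#\{\lambda\vdash n:w_{a,b}(\lambda)=j\}=\#\{\rho\vdash n:m_s(\rho)=j\}$ for all $j$. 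In particular such an equidistribution would explain the striking fact that the answer depends on $(a,b)$ only through the sum $a+b$.

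This equidistribution is where the main work lies, and I would attack it through the $s$-core/$s$-quotient bijection $\lambda\leftrightarrow(\mu;\lambda^{(0)},\dots,\lambda^{(s-1)})$, under which $|\lambda|=|\mu|+s\sum_r|\lambda^{(r)}|$ and which, read on the $s$-abacus, carries the boxes of $\lambda$ with hook divisible by $s$ to the boxes of the quotient. Every box counted by $w_{a,b}$ has hook $(a+b)m$, hence $w_{a,b}(\lambda)\le\sum_r|\lambda^{(r)}|$. The point I expect to be the real obstacle is that $w_{a,b}$ is genuinely finer than the total quotient size — for instance $w_{1,2}((1^6))=1$ while the $3$-quotient of $(1^6)$ is $((1,1),\emptyset,\emptyset)$ of total size $2$ — so one \emph{cannot} simply identify $w_{a,b}(\lambda)$ with a single $|\lambda^{(c)}|$. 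Instead one must track, on each abacus runner, exactly which hole--bead pairs realise the arm condition $\alpha=am-1$; this condition involves the bead counts on the off-runner positions, and the heart of the matter is to show that these off-runner contributions wash out after summing over all $\lambda$. The target is that the refined statistic reproduce the generating function $\prod_{i\ge1}(1-\bL t^{si})^{-1}$ of a single runner's size even though it is not literally that size, while the remaining $s-1$ runners and the $s$-core together supply the factor $\prod_{s\nmid i}(1-t^i)^{-1}$; a clean bijective or transfer-matrix argument establishing this, uniformly in the splitting $a+b=s$, would finish the proof.

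Finally, in the special case $(a,b)=(1,k)$ there is an independent route that can serve as a testing ground and perhaps as a model for the general argument: summing the product of Gaussian binomials of Theorem~\ref{main theorem} over all Hilbert functions $H$ with $|H|=n$ should collapse to the conjectured product. Here the relevant lattice-path combinatorics is exactly that of the $k$-parameter $q,t$-Catalan numbers, and I would expect the needed summation identity to follow from the known product formulas for these, thereby proving the Conjecture at least for all $(1,k)$ before tackling the general equidistribution above.
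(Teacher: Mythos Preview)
The statement you are addressing is labeled a \emph{Conjecture} in the paper, and the paper offers no proof of it; the authors say explicitly that the formula ``is based on computer calculations.'' So there is no paper proof to compare your attempt against.

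Your reduction is sound. The Bialynicki--Birula argument you outline is exactly what the paper carries out in Section~2 for $(a,b)=(1,k)$, and it extends verbatim to general coprime $(a,b)$: the residual $\C^*$ has isolated fixed points at the monomial ideals, the limits exist, and picking the orientation so that the weights $t_1^{-\ell}t_2^{\alpha+1}$ (in the paper's $l(s),a(s)$) are the attracting ones gives cells of dimension $w_{a,b}(\lambda)=\#\{s\in\lambda:\,a\,l(s)=b(a(s)+1)\}$. Your rewriting of the conjectured product as $\sum_\rho \bL^{m_{a+b}(\rho)}t^{|\rho|}$ is also correct. So the Conjecture is equivalent to the equidistribution of $w_{a,b}$ and $m_{a+b}$ on partitions of $n$.

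But your proposal does not prove that equidistribution; you yourself say ``this is where the main work lies'' and only sketch a line of attack through the $s$-core/$s$-quotient correspondence, noting (correctly) that $w_{a,b}$ is not literally a single quotient size. Nothing after that point is an argument. So what you have written is a correct reformulation of an open conjecture together with a plan, not a proof; and since the paper does not claim a proof either, there is no discrepancy---you have simply not advanced beyond the paper on this item.

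One small correction in your example: with the paper's conventions (so $l(s)$ counts boxes strictly to the right), the column $(1^6)$ has $l(s)=0$ for every box, hence $w_{1,2}((1^6))=0$, not $1$. The partition you seem to have in mind is $(6)$, where indeed exactly one box has $l(s)=2=2(a(s)+1)$. This does not affect your reduction, but it does affect the moral you draw about $w_{a,b}$ versus the quotient size.

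Finally, your closing remark that the $(1,k)$ case ought to follow by summing Theorem~\ref{main theorem} over all $H$ is plausible, but note that the paper does not do this either: Theorem~\ref{theorem:combinatorial identity} weights the sum by $q^{\chi(H)}$ and recovers $\sum_N[(\C^2)^{[N]}]t^N$, not the generating series of the fixed loci. So even the $(1,k)$ instance of the Conjecture is left open in the paper.
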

Similar conjectural formulas for the generating series of the classes of some equivariant Hilbert schemes can be found in \cite{Gusein-Zade}.

\subsection{Definition of the $(q,t)$-Catalan numbers} 

A $k$-Dyck path is a lattice path from $(0,0)$ to $(kn,n)$ consisting of $(0,1)$ and $(1,0)$ steps, never going below the line $x=ky$ (see Figure~\ref{pic2}). Let $L^+_{kn,n}$ denote the set of these paths. For a $k$-path $\pi$ let $D'_{\pi}$ be the set of squares which are above $\pi$ and contained in the rectangle with vertices $(0,0)$, $(kn,0)$, $(kn,n)$ and $(0,n)$. The set $D'_{\pi}$ reflected with respect to the horizontal line is a Young diagram. We denote it by $D_{\pi}$. 

For a Young diagram $D$ and a box $s\in D$ let $a(s)$ denote the number of boxes in $D$ in the same column and strictly above $s$ and let $l(s)$ denote the number of boxes in $D$ in the same row and strictly right of $s$ (see Figure~\ref{pic1}).

\begin{figure}[h]
\begin{center}
\includegraphics{ex1.1}
\end{center}
\caption{}
\label{pic1}
\end{figure}

For a $k$-path $\pi$ let $area(\pi)$ be the number of full squares below $\pi$ and above the line $ky = x$, and let
\begin{gather*}
b_k(\pi)=|\{s\in D_{\pi}| ka(s)\le l(s)\le k(a(s) + 1)\}|.
\end{gather*}
An example is on Figure~\ref{pic2}.

\begin{figure}[h]
\begin{center}
\includegraphics{ex2.1}
\end{center}
\caption{A $2$-path $\pi$ with $area(\pi)=15$ and $b_k(\pi)=10$ (contributors to $b_k(\pi)$ are marked by $\diamond$, and those to $area(\pi)$ by $\star$).}
\label{pic2}
\end{figure}

The combinatorial $k$-parameter $(q,t)$-Catalan number is defined by the formula
\begin{gather*}
C_n^{(k)}(q,t)=\sum_{\pi\in L_{kn,n}^+}q^{b_k(\pi)}t^{area(\pi)}.
\end{gather*}
We refer the reader to the book \cite{Haglund} for another equivalent beautiful definitions of the $q,t$-Catalan numbers.

\subsection{$(q,t)$-Catalan numbers and the Hilbert schemes}

Let $V_{k,n}$ be the vector subspace of $\C[x,y]$ generated by the monomials $x^iy^j$ with $i+kj\le kn-k-1$. Let $(\C^2)^{[N](k,n)}$ be the subset of $(\C^2)^{[N]}$ that parametrizes ideals $I\subset \C[x,y]$ such that $I+V_{k,n}=\C[x,y]$. It is easy to see that $(\C^2)^{[N](k,n)}$ is an open subset of the variety $(\C^2)^{[N]}$.

\begin{theorem}\label{theorem:q,t-Catalan}
$$
\sum_{N\ge 0}\left[(\C^2)^{[N](k,n)}\right]t^N=(\bL t)^{\frac{kn(n-1)}{2}}C_n^{(k)}(\bL,\bL^{-1}t^{-1}).
$$
\end{theorem}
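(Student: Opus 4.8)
The plan is to decompose the open set $(\C^2)^{[N](k,n)}$ according to the combinatorial type of the ideal and match each piece with a cell of the $k$-Dyck path decomposition. First I would observe that the subvariety $V_{k,n}\subset\C[x,y]$ is exactly the span of the monomials $x^iy^j$ sitting strictly below the line $x+ky=kn-k$, i.e.\ the monomials corresponding to the staircase Young diagram cut out by this line; call that diagram $\Delta_{k,n}$, so $\dim V_{k,n}=|\Delta_{k,n}|=\frac{kn(n-1)}{2}$ after one checks the count. For $I\in(\C^2)^{[N](k,n)}$ the condition $I+V_{k,n}=\C[x,y]$ forces the monomial basis of $\C[x,y]/I$ to lie inside $\Delta_{k,n}$, so $N\le|\Delta_{k,n}|$ and the complement of the standard monomial diagram of $I$ inside $\Delta_{k,n}$ is again a Young-diagram-shaped region. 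The key geometric input I would use is the Bia\l ynicki-Birula / Ellingsrud--Str{\o}mme style cell decomposition: stratify $(\C^2)^{[N](k,n)}$ by the monomial ideal it degenerates to under the $T_{1,k}$-action (or equivalently by the initial ideal with respect to the weight $(1,k)$), each stratum being an affine space whose dimension is a classical invariant of the associated partition.

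Next I would identify the indexing set and the dimension count. A monomial ideal $I_\mu$ with $\C[x,y]/I_\mu$ spanned by the boxes of a partition $\mu$ satisfies $I_\mu+V_{k,n}=\C[x,y]$ precisely when $\mu\subseteq\Delta_{k,n}$; these $\mu$ are in bijection with lattice paths from $(0,0)$ to $(kn,n)$ staying weakly above $x=ky$, i.e.\ with $\pi\in L^+_{kn,n}$, via $\mu=D_\pi$ (this is exactly the reflected-diagram construction in the statement of the $q,t$-Catalan definition, and $N=kn(n-1)/2-area(\pi)$ up to checking which side of the path each square lies on). For the dimension of the BB cell through $I_\mu$ inside the \emph{open} variety $(\C^2)^{[N](k,n)}$ — rather than inside all of $(\C^2)^{[N]}$ — I would compute the $(1,k)$-positive part of the tangent space $\mathrm{Hom}(I_\mu,\C[x,y]/I_\mu)$ using the Ellingsrud--Str{\o}mme arm/leg formula: the $T$-character of the tangent space is a sum over boxes $s\in\mu$ of two monomials with exponents governed by $a(s),l(s)$, and the weight with respect to $(1,k)$ of each such monomial is a linear expression in $a(s),l(s),k$. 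The cell dimension is the number of boxes contributing a strictly positive (or a chosen sign) weight, and a short case analysis shows this count is exactly $|\{s\in\mu\mid ka(s)\le l(s)\le k(a(s)+1)\}|=b_k(\pi)$ — this is where the three-term inequality defining $b_k$ comes from.

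Putting the pieces together, the class of $(\C^2)^{[N](k,n)}$ is $\sum_{\pi:\,area(\pi)=kn(n-1)/2-N}\bL^{b_k(\pi)}$, so
\begin{gather*}
\sum_{N\ge0}\left[(\C^2)^{[N](k,n)}\right]t^N=\sum_{\pi\in L^+_{kn,n}}\bL^{b_k(\pi)}t^{\frac{kn(n-1)}{2}-area(\pi)}=(\bL t)^{\frac{kn(n-1)}{2}}\,C^{(k)}_n(\bL,\bL^{-1}t^{-1}),
\end{gather*}
after substituting the definition $C^{(k)}_n(q,t)=\sum_\pi q^{b_k(\pi)}t^{area(\pi)}$ and simplifying the exponent of $t$. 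I expect the main obstacle to be the dimension computation: one must verify that the relevant BB cells are honestly affine spaces when restricted to the open locus $I+V_{k,n}=\C[x,y]$ (the stratification by initial ideal is standard on the full Hilbert scheme, but one should check the open condition is compatible with it and does not truncate cells), and then carry out carefully the arm/leg sign analysis to land exactly on the interval $ka(s)\le l(s)\le k(a(s)+1)$ rather than an off-by-one variant. The correspondence $N\leftrightarrow area(\pi)$ and the bookkeeping of which boxes lie above versus below the path also needs to be pinned down consistently with the reflection convention used to define $D_\pi$, but that is routine once the picture is drawn.
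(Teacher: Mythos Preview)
Your overall strategy---stratify by torus degenerations, identify fixed points with $k$-Dyck paths via $D_\pi$, read off cell dimensions from the arm/leg formula---is exactly what the paper does. But your dimension count is off, and the error shows up in your final displayed equality, which is false as written.

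Concretely: the flow by $T_{1,k}$ does \emph{not} send a general ideal to a monomial ideal; the limit is only $(1,k)$-quasihomogeneous. The paper therefore proceeds in two steps: first $\rho(p)=\lim_{t\to 0} t\cdot p$ for $t\in T_{1,k}$ lands in a component $(\C^2)^{[N]}_{1,k}(H)$, and this map is an affine bundle of relative dimension
\[
d^+_{1,k}(H)=|\{s:l(s)+1>ka(s)\}|+|\{s:k(a(s)+1)>l(s)\}|=|D|+|\{s:ka(s)\le l(s)<k(a(s)+1)\}|;
\]
second, the Bia\l ynicki--Birula cells inside $(\C^2)^{[N]}_{1,k}(H)$ (for a further generic $T_{1,\alpha}$) have dimension $|\{s:l(s)=k(a(s)+1)\}|$. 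The total contribution of the cell through $I_D$ is therefore $|D|+b_k(\pi)$, not $b_k(\pi)$. Your ``short case analysis'' drops the $|D|$ summand: note that for every box $s$ the two tangent weights $l(s)+1-ka(s)$ and $k(a(s)+1)-l(s)$ sum to $k+1>0$, so at least one of them is positive, which already forces the positive-weight count to be at least $|D|=N$.

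With the corrected exponent $N+b_k(\pi)$ on $\bL$, your last line becomes
\[
\sum_\pi (\bL t)^{\frac{kn(n-1)}{2}-\mathrm{area}(\pi)}\,\bL^{b_k(\pi)}
=(\bL t)^{\frac{kn(n-1)}{2}}\sum_\pi \bL^{b_k(\pi)}(\bL t)^{-\mathrm{area}(\pi)}
=(\bL t)^{\frac{kn(n-1)}{2}}C_n^{(k)}(\bL,\bL^{-1}t^{-1}),
\]
which is the desired identity; as stated, however, your middle expression $\sum_\pi \bL^{b_k(\pi)}t^{\frac{kn(n-1)}{2}-\mathrm{area}(\pi)}$ equals $t^{\frac{kn(n-1)}{2}}C_n^{(k)}(\bL,t^{-1})$, not the right-hand side of the theorem.
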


\subsection{Combinatorial identity}

We say that a sequence $H=(d_0,d_1,\ldots)$ is good if for any $i\ge \eta(H)$ we have $d_i-d_{i+1}+\tau(i)\ge 0$ and $d_{i+1}\le d_{i+1-k}$.

\begin{theorem}\label{theorem:combinatorial identity}
$$
\sum_{\{\text{good } H\}}\prod_{i\ge\eta}G(d_i-d_{i+1}+\tau(i),d_{i+1}-d_{i+1+k})_q q^{\chi(H)}t^{\sum d_i}=\prod_{i\ge 1}\frac{1}{1-qt^i},
$$
where 
\begin{multline*}
\chi(H)=\sum_{i\ge\eta}(d_i-d_{i+1}+\tau(i))\times\\\times\left(\frac{k}{2}(d_i-d_{i+1}+\tau(i)-1)+\sum_{j=1}^{k-1}(k-j)(d_{i+j}-d_{i+j+1}+\tau(i+j))\right).
\end{multline*}
\end{theorem}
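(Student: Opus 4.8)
The plan is to derive Theorem~\ref{theorem:combinatorial identity} as a formal bookkeeping consequence of Theorem~\ref{main theorem} together with the stratification of the Hilbert scheme. The key observation is that the right-hand side $\prod_{i\ge 1}\frac{1}{1-qt^i}$ is, up to the substitution $q\mapsto\bL$, precisely the generating function appearing in Theorem~\ref{theorem:q,t-Catalan} after one passes to an appropriate limit in $n$ (or, more directly, it is the conjectural formula of the Conjecture specialized to $(a,b)=(1,k)$ after collecting the $\bL$-free factors — but since that is only conjectural, I would instead anchor the proof in a statement that is actually proved). Concretely, the cleanest route is: first establish that $\sum_{n\ge 0}[(\C^2)^{[n]}_{1,k}]\,t^n=\prod_{i\ge 1,\;(k+1)\nmid i}\frac{1}{1-t^i}\prod_{i\ge 1}\frac{1}{1-\bL t^{(k+1)i}}$ as an \emph{equality of power series} follows from Theorem~\ref{main theorem} by summing over strata, and then recognize that the combinatorial identity is the ``refined'' version of this sum where one additionally records a statistic $\chi$ conjugate to the exponent of $\bL$.

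Here is the order of the steps. First I would recall that $(\C^2)^{[n]}_{1,k}=\bigsqcup_H (\C^2)^{[n]}_{1,k}(H)$ over the (finitely many, for fixed $n$) sequences $H$ with $\sum d_i=n$ for which the stratum is nonempty, so that in $K_0(\nu_\C)$ one has $[(\C^2)^{[n]}_{1,k}]=\sum_H [(\C^2)^{[n]}_{1,k}(H)]$. Next I would identify the nonemptiness condition: a stratum $(\C^2)^{[n]}_{1,k}(H)$ is nonempty if and only if $H$ is ``good'' in the sense of the definition preceding the theorem — this is an elementary consequence of the combinatorial description of quasihomogeneous monomial ideals (the Young-diagram / staircase condition governing which colength vectors $H$ actually arise), and each of the binomial factors $G(d_i-d_{i+1}+\tau(i),\,d_{i+1}-d_{i+1+k})_{\bL}$ makes sense precisely under the goodness inequalities $d_i-d_{i+1}+\tau(i)\ge 0$ and $d_{i+1}\le d_{i+1-k}$. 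Plugging Theorem~\ref{main theorem} into the stratification sum then gives
\begin{gather*}
\sum_{n\ge 0}\bigl[(\C^2)^{[n]}_{1,k}\bigr]t^n=\sum_{\{\text{good }H\}}\Bigl(\prod_{i\ge\eta}G(d_i-d_{i+1}+\tau(i),d_{i+1}-d_{i+1+k})_{\bL}\Bigr)t^{\sum d_i},
\end{gather*}
which is the $q=\bL$, $\chi$-forgetting specialization of the claimed identity. To get the refined statement I would then introduce the extra variable $q$ by hand and check that the left side of Theorem~\ref{theorem:combinatorial identity} is the correct $q$-analogue: this amounts to verifying that $\chi(H)$ is exactly the quantity by which a generic quasihomogeneous ideal in the stratum $(\C^2)^{[n]}_{1,k}(H)$ must be ``shifted'' — equivalently, that $\chi(H)$ records the codimension/weight bookkeeping so that $\prod G(\cdots)_q\,q^{\chi(H)}$ is the Poincaré-type polynomial of the stratum with its natural grading, and that summing these over good $H$ telescopes to $\prod_{i\ge 1}(1-qt^i)^{-1}$.

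For the last, purely combinatorial, step I would argue by a bijective/generating-function manipulation rather than geometry: reparametrize a good sequence $H$ by the nonnegative integers $a_i:=d_i-d_{i+1}+\tau(i)$ for $i\ge\eta$ (together with the value of $\eta$), so that $d_i$ and $\sum d_i$ become explicit linear expressions in the $a_i$, the factor $G(a_i,\,\sum_{\text{window}}a_j)_q$ becomes a Gaussian binomial, and $\chi(H)$ becomes the stated quadratic form in the $a_i$. Then one recognizes the sum as an iterated application of the $q$-binomial theorem / Cauchy identity: each Gaussian binomial $G(a_i, m_i)_q$ paired with the appropriate power of $q\,t$ collapses a pair of summation variables, and after reindexing the whole sum factors as a product over $i\ge 1$ of geometric series $\sum_{m\ge 0}(qt^i)^m=\frac{1}{1-qt^i}$. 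In effect one is proving the Cauchy-type identity $\prod_i\frac{1}{1-qt^i}=\sum_{\lambda}q^{?}t^{|\lambda|}\cdot(\text{Gaussian product})$ in a slightly disguised form, where $\lambda$ runs over partitions encoded by $H$.

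The main obstacle I expect is the precise matching of the statistic $\chi(H)$: unwinding the definition of $\chi$ and showing that, under the change of variables to the $a_i$, the powers of $q$ produced by the repeated $q$-binomial contractions accumulate to exactly the stated quadratic form (with the factor $\frac{k}{2}(a_i-1)$ and the cross terms $\sum_{j=1}^{k-1}(k-j)a_{i+j}$) is the delicate combinatorial heart of the argument. Everything else — the stratification, the nonemptiness criterion, and the final telescoping — is routine once $\chi$ is correctly accounted for, but getting the $q$-exponents to close up requires careful attention to the overlapping windows of width $k$ in which the $d_i$ interact, and to the boundary contributions near $i=\eta(H)$ where the convention $d_{-1}=0$ and the definition of $\eta$ interact with $\tau$.
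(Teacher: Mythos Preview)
Your proposal has a genuine gap in the geometric part, and the combinatorial fallback is too vague to stand on its own.

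The core confusion is \emph{which} variety you are stratifying. Summing Theorem~\ref{main theorem} over good $H$ yields $\sum_n[(\C^2)^{[n]}_{1,k}]t^n$, whose closed form is the \emph{Conjecture} specialized to $(a,b)=(1,k)$ --- a product with an $\bL$-free factor $\prod_{(k+1)\nmid i}(1-t^i)^{-1}$ --- and \emph{not} $\prod_{i\ge 1}(1-\bL t^i)^{-1}$. So the sentence ``which is the $q=\bL$, $\chi$-forgetting specialization of the claimed identity'' is simply false: dropping $q^{\chi(H)}$ changes the right-hand side. Consequently your plan to ``introduce the extra variable $q$ by hand'' and view $\chi$ as a refinement of that stratification cannot close; there is no grading on $(\C^2)^{[n]}_{1,k}(H)$ itself whose generating function supplies the missing factor.

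What the paper does instead is stratify the \emph{full} Hilbert scheme $(\C^2)^{[N]}$ via the $T_{1,k}$-contraction $\rho\colon(\C^2)^{[N]}\to(\C^2)^{[N]}_{1,k}$, $\rho(p)=\lim_{t\to 0}tp$. Each preimage $\rho^{-1}\bigl((\C^2)^{[N]}_{1,k}(H)\bigr)$ is an affine bundle of some rank $d^+_{1,k}(H)$, so
\[
[(\C^2)^{[N]}]=\sum_H[(\C^2)^{[N]}_{1,k}(H)]\,\bL^{d^+_{1,k}(H)}.
\]
Now the \emph{known} formula $\sum_N[(\C^2)^{[N]}]t^N=\prod_{i\ge 1}(1-\bL^{i+1}t^i)^{-1}=\prod_{i\ge 1}\bigl(1-\bL(\bL t)^i\bigr)^{-1}$ does the work: after replacing $\bL t$ by $t$ the right-hand side becomes exactly $\prod_{i\ge 1}(1-\bL t^i)^{-1}$, and the whole theorem reduces to the single equality
\[
d^+_{1,k}(H)=\sum_i d_i+\chi(H).
\]
The paper proves this by a short induction on $N$, reading $d^+_{1,k}(H)$ off the tangent-weight formula~\eqref{weight decomposition} at the unique monomial fixed point of the stratum whose diagram has no box with $l(s)=k(a(s)+1)$. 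In other words, $\chi(H)$ has a concrete geometric meaning --- Bia{\l}ynicki-Birula attracting-fibre dimension minus $N$ --- and that is what makes the bookkeeping close. Your proposal never lands on this interpretation; neither the $q,t$-Catalan theorem nor the Conjecture is the right anchor.

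Your final, purely combinatorial route (reparametrize by $a_i=d_i-d_{i+1}+\tau(i)$ and iterate the $q$-binomial theorem) might in principle give an independent proof, but as written it is a hope rather than an argument: you have not specified the order of summation, which $q$-series identity you iterate, or how the boundary at $i=\eta$ and the $k$-periodicity of $\tau$ are handled. If you pursue that line you must actually produce the chain of identities; nothing in the paper suggests this is routine.
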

In the case $k=1$ this identity was proved in \cite{Kirillov}.

\subsection{Homogeneous nested Hilbert schemes}\label{main results:nested}

Let $\bn=(n_1,n_2,\ldots,n_k)$, where $n_1,\ldots,n_k$ are non-negative integers such that $n_1\ge n_2\ge\ldots\ge n_k$. Let $\bH=(H_1,H_2,\ldots,H_k)$, where $H_i=(d_{i,0},d_{i,1},\ldots)$ and $\sum_{j\ge 0}d_{i,j}=n_i$. Let $(\C^2)^{[\bn]}_{a,b}(\bH)=\{(Z_1,\ldots,Z_k)\in(\C^2)^{[\bn]}|Z_i\in(\C^2)^{[n_i]}_{a,b}(H_i)\}$. Let $E(\bH)=\{i\in \Z_{\ge 0}|d_{1,i}=d_{2,i}=\ldots=d_{k,i}\}$, $n=\sum_{i=1}^k n_i$ and $H=(d_0,d_1,\ldots)$, where $d_{i+kj}=d_{i+1,j}, 0\le i<k, j\ge 0$. We will prove the following statement.
\begin{theorem}\label{theorem:nested}
Suppose that for any two numbers $i,j\in\Z_{\ge 0}\backslash E(\bH), i<j$, we have $j-i\ge 2$. Then the variety $(\C^2)^{[\bn]}_{1,1}(\bH)$ is isomorphic to $(\C^2)^{[n]}_{1,k}(H)$. 
\end{theorem}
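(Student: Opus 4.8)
The plan is to prove the isomorphism through the natural map $\pi\colon(\C^2)^{[n]}_{1,k}\to(\C^2)^{[\bn]}_{1,1}$ and to show that, under the hypothesis on $E(\bH)$, $\pi$ restricts to an isomorphism between the two components in the statement. The first step is to fix a convenient description of $\pi$. Decomposing $\C[x,y]=\bigoplus_{r=0}^{k-1}x^r\C[x^k,y]$ and identifying each summand $x^r\C[x^k,y]$ with $R:=\C[u,y]$ via $x^{r+km}y^b\mapsto u^my^b$, one computes that, under these identifications, $x$ acts on $R^{\oplus k}$ by $(a_0,\dots,a_{k-1})\mapsto(ua_{k-1},a_0,\dots,a_{k-2})$, $y$ acts diagonally, and the weight-$(kj+r)$ subspace of $\C[x,y]$ for the $T_{1,k}$-action corresponds to the degree-$j$ piece $R_j$. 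Hence a $T_{1,k}$-quasihomogeneous ideal $J$ is the same datum as a chain $J^{(0)}\subseteq J^{(1)}\subseteq\cdots\subseteq J^{(k-1)}$ of homogeneous ideals of $R$ subject to the single additional relation $uJ^{(k-1)}\subseteq J^{(0)}$; comparing graded dimensions gives $\dim R_j/(J^{(r)})_j=d_{kj+r}=d_{r+1,j}$, so $J^{(r)}$ has Hilbert function $H_{r+1}$ and colength $n_{r+1}$. The map $\pi$ is then $J\mapsto(I_1,\dots,I_k):=(J^{(0)},\dots,J^{(k-1)})$, and it sends $(\C^2)^{[n]}_{1,k}(H)$ into $(\C^2)^{[\bn]}_{1,1}(\bH)$.

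Next I would observe that $\pi$ is an isomorphism onto its image. It is evidently a morphism and injective; and on the closed locus $\{(I_1,\dots,I_k):xI_k\subseteq I_1\}$ inside $(\C^2)^{[\bn]}_{1,1}$ the assignment sending such a tuple to the $T_{1,k}$-quasihomogeneous ideal whose $x^r\C[x^k,y]$-component is the copy of $I_{r+1}$ under the identification with $R$ is again a morphism, inverse to $\pi$ (this $xI_k\subseteq I_1$ is the relation $uJ^{(k-1)}\subseteq J^{(0)}$, with $u$ playing the role of the coordinate $x$ on the target $\C^2$). Therefore the whole content of the theorem reduces to the following: if no two integers in $\Z_{\ge0}\setminus E(\bH)$ are consecutive, then every $(I_1,\dots,I_k)\in(\C^2)^{[\bn]}_{1,1}(\bH)$ automatically satisfies $xI_k\subseteq I_1$.

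I would prove this degree by degree; it suffices to check $x(I_k)_m\subseteq(I_1)_{m+1}$ for every $m\ge0$. If $m\in E(\bH)$, then $(I_1)_m$ and $(I_k)_m$ have the same dimension and $(I_1)_m\subseteq(I_k)_m$, hence they coincide, and $x(I_k)_m=x(I_1)_m\subseteq(I_1)_{m+1}$ because $I_1$ is an ideal. If $m\notin E(\bH)$, then by hypothesis $m+1\in E(\bH)$, so likewise $(I_1)_{m+1}=(I_k)_{m+1}$, and $x(I_k)_m\subseteq(I_k)_{m+1}=(I_1)_{m+1}$ trivially. In either case the required containment holds, so $xI_k\subseteq I_1$. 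Combined with the previous two steps, $\pi$ restricts to an isomorphism $(\C^2)^{[n]}_{1,k}(H)\xrightarrow{\ \sim\ }(\C^2)^{[\bn]}_{1,1}(\bH)$ (in particular, the two varieties are empty simultaneously).

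I expect the main obstacle to be not this final dichotomy, which is short once the setup is in place, but rather the bookkeeping in the first two steps: matching up residue classes modulo $k$, Hilbert functions, colengths and the chain ordering $n_1\ge\cdots\ge n_k\leftrightarrow J^{(0)}\subseteq\cdots\subseteq J^{(k-1)}$ under the identification $\C[x,y]\cong R^{\oplus k}$, and, more seriously, verifying that $\pi$ is an isomorphism onto the locus $\{xI_k\subseteq I_1\}$ at the scheme-theoretic level, i.e. that the inverse assignment is a genuine morphism and not merely a bijection on closed points. If the construction and elementary properties of $\pi$ are already on record, essentially only the degree-by-degree argument remains.
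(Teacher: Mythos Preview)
Your proposal is correct and, at the level of content, matches the paper's argument: the paper also constructs the natural map $\pi$, and shows it is an isomorphism under the hypothesis by exploiting precisely the dichotomy ``for every degree $m$, either $m\in E(\bH)$ or $m+1\in E(\bH)$.'' The difference is one of language rather than substance. The paper works through the quiver descriptions of $(\C^2)^{[n]}_{1,k}(H)$ and $(\C^2)^{[\bn]}_{1,1}(\bH)$: there $\pi$ sends $(B_1,B_2,i)$ to $(C_1,C_2,p,i)=(B_1^{\,k},B_2,B_1,i)$, and the inverse $\phi$ is written down explicitly using that the projection maps $p_{j,h}\colon V_{j,h}\to V_{j+1,h}$ are isomorphisms whenever $d_{j,h}=d_{j+1,h}$ --- i.e.\ whenever $h\in E(\bH)$. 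Your $(I_1)_m=(I_k)_m$ in degree $m\in E(\bH)$ is exactly the ideal-theoretic translation of that invertibility, and your cases ``$m\in E$'' and ``$m+1\in E$'' correspond to the two formulas the paper gives for $B_{1,k-1+kh}$.

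What each approach buys: your formulation via the decomposition $\C[x,y]=\bigoplus_{r=0}^{k-1}x^r\C[x^k,y]$ makes the map $\pi$ transparent and reduces the theorem to the clean closed condition $xI_k\subseteq I_1$, which you then verify degree by degree. The paper's quiver description, on the other hand, dispenses with the scheme-theoretic worry you flag at the end: once both sides are written as GIT quotients of affine quiver-representation spaces, both $\pi$ and its inverse $\phi$ are visibly algebraic maps between prequotients that are equivariant for the gauge groups, so they descend to morphisms of varieties without further argument. Either packaging yields the result; the underlying observation is the same.
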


\subsection{Organization of the paper}
In section \ref{section:cellular decomposition} we construct a cellular decomposition of the quasihomogeneous Hilbert scheme and reduce Theorem \ref{main theorem} to a combinatorial identity. In section \ref{section:bijection} we construct a bijection that is a generalization of the hook code from \cite{Iarrobino2}. The main result of this section is Proposition \ref{proposition:bijection}. Finally, in section \ref{section:proof of theorem} we apply it to conclude the proof of Theorem \ref{main theorem}. The proof of Theorem \ref{theorem:q,t-Catalan} is in section \ref{section:proof of Catalan}. We prove Theorem \ref{theorem:combinatorial identity} in section \ref{section:proof of combinatorial identity}. Section \ref{section:homogeneous nested} contains the proof of Theorem \ref{theorem:nested}. 

\subsection{Acknowledgments}
The author is grateful to S. M. Gusein-Zade for suggesting the area of research. The author is grateful to B. L. Feigin and A. N. Kirillov who noticed that the main result from \cite{Iarrobino2} can be generalized to the $(1,k)$-case and suggested the author to work on this problem. The author is grateful to S. Shadrin, M. Kazarian, S. Lando and A. Oblomkov for useful discussions. 

\section{Cellular decomposition of $(\C^2)^{[n]}_{1,k}$}\label{section:cellular decomposition}

In this section we reduce Theorem \ref{main theorem} to the combinatorial identity~\eqref{combinatorial identity} using a cellular decomposition of $(\C^2)^{[n]}_{1,k}$.

Consider the $T$-action on $(\C^2)^{[n]}$. Fixed points of this action correspond to monomial ideals in $\C[x,y]$. Let $I\subset\C[x,y]$ be a monomial ideal of colength $n$. Let $D_I=\{(i,j)\in\Z_{\ge 0}^2|x^iy^j\notin I\}$ be the corresponding Young diagram. We will use the following notations. For a Young diagram $D$ let
\begin{align*}
&r_l(D)=|\{(i,j)\in D|j=l\}|,\\
&c_l(D)=|\{(i,j)\in D|i=l\}|,\\
&diag^{a,b}_l(D)=|\{(i,j)\in D|ai+bj=l\}|,\\
&diag^{a,b}(D)=(diag^{a,b}_0(D),diag^{a,b}_1(D),diag^{a,b}_2(D),\ldots).
\end{align*}

Let $p\in(\C^2)^{[n]}$ be the fixed point corresponding to a Young diagram $D$. Let $R(T)=\Z[t_1,t_2]$ be the representation ring of $T$. Then the weight decomposition of $T_p(\C^2)^{[n]}$ is given by (see \cite{Ellingsrud})
\begin{gather}\label{weight decomposition}
T_p(\C^2)^{[n]}=\sum_{s\in D}\left(t_1^{l(s)+1}t_2^{-a(s)}+t_1^{-l(s)}t_2^{a(s)+1}\right).
\end{gather}
Obviously, the variety $(\C^2)^{[n]}_{1,k}$ is invariant under the $T$-action and contains all fixed points of the $T$-action on $(\C^2)^{[n]}$. Hence, the weight decomposition of $T_p(\C^2)^{[n]}_{1,k}$ is given by
\begin{gather*}
T_p(\C^2)^{[n]}_{1,k}=\sum_{\substack{s\in D\\l(s)+1=ka(s)}}t_1^{l(s)+1}t_2^{-a(s)}+\sum_{\substack{s\in D\\l(s)=k(a(s)+1)}}t_1^{-l(s)}t_2^{a(s)+1}.
\end{gather*}

Consider the $T_{1,\alpha}$-action on $(\C^2)^{[n]}_{1,k}$, where $\alpha$ is a positive integer. If $\alpha$ is big enough then the set of fixed points of the $T_{1,\alpha}$-action coincides with the set of fixed points of the $T$-action. For a fixed point $p\in(\C^2)^{[n]}_{1,k}$ let $C_p=\{z\in(\C^2)^{[n]}_{1,k}|\lim_{t\to 0,t\in T_{1,\alpha}}tz=p\}$. The variety $(\C^2)^{[n]}_{1,k}$ has a cellular decomposition with the cells $C_p$ (see \cite{B1,B2}). Therefore, the cells $C_p$ are isomorphic to affine spaces. It is easy to compute that if a point $p$ corresponds to a Young diagram $D$, then $\dim(C_p)=|\{s\in D|l(s)=k(a(s)+1)\}|$. Moreover, $p\in(\C^2)^{[n]}_{1,k}(H)\Leftrightarrow diag^{1,k}(D)=H$, where $H=(d_0,d_1,\ldots)$ is an arbitrary sequence of non-negative integers. 

Let $\mathcal D$ be the set of Young diagrams. We see that 
\begin{gather}\label{identity1}
\left[(\C^2)^{[n]}_{1,k}(H)\right]=\sum_{\substack{D\in\mathcal D\\diag^{1,k}(D)=H}}\bL^{|\{s\in D|l(s)=k(a(s)+1)\}|}.
\end{gather}
Therefore, Theorem \ref{main theorem} follows from the combinatorial identity:
\begin{gather}\label{original identity}
\sum_{\substack{D\in\mathcal D\\diag^{1,k}(D)=H}}q^{|\{s\in D|l(s)=k(a(s)+1)\}|}=\prod_{i\ge\eta}G(d_i-d_{i+1}+\tau(i),d_{i+1}-d_{i+1+k})_{q}.
\end{gather}
It is not hard to check that this identity is equivalent to the following identity
\begin{multline}\label{combinatorial identity} 
\sum_{\substack{D\in\mathcal D\\diag^{1,k}(D)=H}}q^{|\{s\in D|l(s)=k(a(s)+1)\}|}=\\ =\frac{1-q}{1-q^{d_{\eta-k+1}+1-d_{\eta+1}}}\prod_{i\ge\eta+1}G(d_i-d_{i+1}+\tau(i),d_{i-k}-d_i)_q.
\end{multline}
Here we adopt the following conventions, $d_i=0$, if $-k\le i\le -1$ and $d_{-k-1}=-1$. 

\begin{remark}
Combinatorial constructions from the paper \cite{Loehr} can be used to prove \eqref{original identity}. However, our constructions are different from them. 
\end{remark}

\section{Bijection}\label{section:bijection}

In this section we show how to encode an element of the set $\{D\in\mD|diag^{1,k}(D)=H\}$ as a sequence of partitions. The main result of this section is Proposition~\ref{proposition:bijection}. In section~\ref{subsection:definition} we define a map $F$ from the set $\{D\in\mD|diag^{1,k}(D)=H\}$ to the set of sequences $(P_0,P_1,\ldots)$, where $P_i$ are Young diagrams. In section~\ref{subsection:main properties} we prove the main properties of the map $F$. In section~\ref{subsection:injectivity} we prove an injectivity of the map $F$ and in section~\ref{subsection:image} we describe the image of $F$.

In this section we fix an arbitrary sequence $H=(d_0,d_1,\ldots)$ of non-negative integers.

\subsection{The definition of the map $F$}\label{subsection:definition}

For a Young diagram $D$ let 
\begin{align*}
&B_m(D)=\{j\in\Z_{\ge 0}|r_j(D)\ne 0,kj+r_j(D)-1=m\},\\
&h_m(D)=|\{s=(i,j)\in D|j=m,l(s)=k(a(s)+1)\}|.
\end{align*} 
Let $B_m(D)=\{j_1,j_2,\ldots\}$, where $j_1\le j_2\le\ldots$. Then $h_{j_1}(D)\ge h_{j_2}(D)\ge\ldots$, and we denote the partition $(h_{j_1}(D),h_{j_2}(D),\ldots)$ by $\lambda(D,m)$.

For a partition $\lambda=\lambda_0,\ldots,\lambda_r,\lambda_0\ge\ldots\ge\lambda_r$ let $D_{\lambda}=\{(i,j)\in\Z_{\ge 0}^2|i\le r, j\le \lambda_i-1\}$ be the corresponding Young diagram. Let $\theta(H)$ be the largest $i\le\eta(H)$ such that $i\equiv k-1\mod k$.

Let $D$ be a Young diagram such that $diag^{1,k}(D)=H$. We denote by $F(D)$ a sequence of Young diagrams $(F(D)_0,F(D)_1,\ldots)$ such that $F(D)_i=D_{\lambda(D,i+\theta)}$.

We give an example in Figure~\ref{pic3}. We write the number $i+kj$ into the box $(i,j)\in D$ for the reader's convenience.

\begin{figure}[t]
\begin{center}
\includegraphics{ex3.1}
\end{center}
\caption{}
\label{pic3}
\end{figure}

\subsection{The main properties of $F$}\label{subsection:main properties}

We use the following notations:
\begin{align*}
&w_i(H)=d_{i-k+\theta}-d_{i+\theta}+1,\\
&f_i(H)=\begin{cases}
						d_{i+\theta}-d_{i+1+\theta},&\text{if $k\nmid i$},\\
						d_{i+\theta}-d_{i+1+\theta}+1,&\text{if $k\mid i$}.
				\end{cases}
\end{align*}
We denote by $R(M,N)$ the rectangle in the integral lattice defined by $R(M,N)=\{(i,j)\in\Z_{\ge 0}^2|i\le M-1, j\le N-1\}$. We denote by $\mD(M,N)$ the set $\{D\in\mD|D\subset R(M,N)\}$.

\begin{lemma}
The Young diagram $F(D)_i$ lies in the rectangle $R(f_i,w_i)$.
\end{lemma}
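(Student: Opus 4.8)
The plan is to unwind the two defining quantities of the rectangle, $f_i(H)$ and $w_i(H)$, in terms of the combinatorial data of $D$ and to show that the partition $\lambda(D, i+\theta)$ has at most $f_i$ parts, each of size at most $w_i$. By construction $F(D)_i = D_{\lambda(D,i+\theta)}$, and $D_\lambda \subset R(M,N)$ precisely when $\lambda$ has at most $M$ parts and $\lambda_0 \le N$; so the Lemma reduces to these two numerical bounds. Throughout I fix $m = i + \theta$ and work with the set $B_m(D) = \{j \ge 0 : r_j(D) \ne 0,\ kj + r_j(D) - 1 = m\}$ and the numbers $h_j(D)$.

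First I would count the parts. The number of parts of $\lambda(D,m)$ is $|B_m(D)|$, the number of rows $j$ of $D$ whose rightmost box $(r_j(D)-1, j)$ lies on the diagonal $\{x + ky = m\}$, i.e. with $\mathrm{diag}^{1,k}$-value exactly $m$. I want to show $|B_m(D)| \le f_i(H)$. The key observation is that the rows of $D$ are nested, $r_0(D) \ge r_1(D) \ge \cdots$, so as $j$ increases the quantity $kj + r_j(D) - 1$ changes in a controlled way; comparing consecutive rows, the values $kj + r_j(D) - 1$ and $k(j+1) + r_{j+1}(D) - 1$ differ by $k - (r_j(D) - r_{j+1}(D)) \le k$. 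Hence each diagonal value $m$ is attained by the "end-of-row" statistic for a bounded number of consecutive rows, and counting how many rows contribute forces a relation to $\mathrm{diag}^{1,k}_m(D) - \mathrm{diag}^{1,k}_{m+1}(D) = d_m - d_{m+1}$, plus a correction of $1$ exactly in the residue class where $k \mid i$ (this is the $\tau$/$f_i$ bookkeeping). I expect the precise combinatorial count to match $f_i(H)$ on the nose once one sorts out which rows straddle the diagonal.

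Next I would bound the largest part, $\lambda_0 = h_{j_1}(D)$ where $j_1 = \min B_m(D)$. By definition $h_{j_1}(D)$ counts boxes $s=(i',j_1)$ in row $j_1$ with $l(s) = k(a(s)+1)$ — the boxes in that row contributing to the cell dimension. I want $h_{j_1}(D) \le w_i(H) = d_{m-k} - d_m + 1$. The natural approach is to set up an injection from the set of such boxes in row $j_1$ into a set counted by $d_{m-k} - d_m$, with one box left over (accounting for the $+1$): a box $s$ in row $j_1$ with $l(s) = k(a(s)+1)$ sits on some diagonal $x + ky = m'$, and the arm/leg condition together with nestedness of columns pins down how $m'$ relates to $m$ and to $m - k$, so that distinct such boxes land on distinct diagonals strictly between (roughly) $m - k$ and $m$, whose diagonal counts telescope to $d_{m-k} - d_m$.

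The main obstacle will be the second bound — controlling $h_{j_1}(D)$ by $w_i(H)$ — because it requires translating the arm-leg equality $l(s) = k(a(s)+1)$, which is a statement about the local shape of $D$ near the box $s$, into a global statement about the diagonal-dimension vector $\mathrm{diag}^{1,k}(D) = H$. Getting the $+1$ corrections consistent between the "number of parts" count and the "largest part" count (they must conspire so that $D_{\lambda(D,i+\theta)}$ genuinely fits, not merely fits up to an off-by-one) is the delicate point; I would handle it by carefully identifying the unique extremal row $j_1$ and the unique extremal box in it, and checking the edge cases $i = 0$ (where $k \mid i$) and the boundary behaviour near $i = \eta - \theta$ separately using the stated conventions $d_i = 0$ for $-k \le i \le -1$.
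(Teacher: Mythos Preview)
Your reduction to the two numerical bounds is correct, and for the first one (bounding $|B_m(D)|$ by $f_{m-\theta}$) your instinct that the answer is $d_m-d_{m+1}$ up to a unit correction is right. But the route you sketch, tracking how $kj+r_j(D)-1$ drifts as $j$ increases, does not by itself bound the number of returns to a fixed value $m$: the increments $k-(r_j-r_{j+1})$ can be negative, so the end-of-row diagonal can revisit $m$ many times, and nothing in your argument rules that out. The paper's argument is a one-liner that avoids this entirely: a row $j$ lies in $B_m$ exactly when it contains a box on diagonal $m$ but none on diagonal $m+1$; every row containing a box on diagonal $m+1$ also contains one on diagonal $m$ except possibly the single row $j=(m+1)/k$ when $k\mid m+1$, whence $|B_m|=d_m-d_{m+1}$ or $d_m-d_{m+1}+1$ according to whether $k\nmid m+1$ or $k\mid m+1$.

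The second bound is where your plan has a genuine gap. The mechanism you propose, that the boxes $s=(i,j_1)$ with $l(s)=k(a(s)+1)$ ``land on distinct diagonals strictly between $m-k$ and $m$'', is false: there are only $k-1$ such diagonals, while $h_{j_1}(D)$ can be arbitrarily large. What actually happens is that the condition $l(s)=k(a(s)+1)$ for $s=(i,a)$ in a row $a\in B_m(D)$ rewrites as $i+kc_i(D)=m$; so you are counting columns whose \emph{top} (the first empty cell) sits on diagonal $m$, not boxes on intermediate diagonals. The paper exploits this by passing to the truncated diagram $D'$ consisting of rows $\ge a$: if $d'_p$ denotes the $p$-th diagonal count of $D'$, one gets the exact identity $h_a(D)=d'_{m-k}-d'_m+1$ (each such column contributes its top box to diagonal $m-k$ of $D'$ and removes one from diagonal $m$). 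The inequality $d'_{m-k}-d'_m\le d_{m-k}-d_m$ then follows because any row $j<a$ that meets diagonal $m$ automatically meets diagonal $m-k$ (its length $r_j\ge r_a$ forces this). This truncation step is the missing idea in your outline; once you have it there are no off-by-one edge cases left to chase.
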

\begin{proof}
Consider a point $(i,j)\in D$. Let $i+kj=l$. Suppose $k\nmid l$, then $(i-1,j)\in D$ and $j\notin B_{l-1}(D)$. Hence, $|B_{l-1}(D)|=d_{l-1}-d_l$. Suppose $k\mid l$. If $i\ne 0$, then $(i-1,j)\in D$ and $j\notin B_{l-1}(D)$. Hence, $|B_{l-1}(D)|\le d_{l-1}-d_l+1$. Thus, we have proved that $r_0(F(D)_{l-1-\theta})\le f_{l-1-\theta}$.

Consider a number $a\in B_l(D)$. Let $d_m'=|\{(i,j)\in D|j\ge a, i+kj=m\}|$. Clearly, $h_a=d_{l-k}'-d_l'+1\le d_{l-k}-d_l+1$. This proves that $c_0(F(D)_{l-\theta})\le w_{l-\theta}$.
\end{proof}
The following statement describes an important property of the numbers $w_i(H)$ and $f_i(H)$. 
\begin{lemma}\label{lemma:not empty}
The set $\{D\in\mD|diag^{1,k}(D)=H\}$ is not empty if and only if for any $i>\eta-\theta$ the following condition holds: $f_i\ge 0,w_i\ge 1$.
\end{lemma}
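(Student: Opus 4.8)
The plan is to characterize the non-emptiness of $\{D\in\mD\mid diag^{1,k}(D)=H\}$ directly in terms of the sequence $H$, and then observe that the resulting inequalities are exactly $f_i\ge 0$, $w_i\ge 1$ for $i>\eta-\theta$. Recall that a sequence $D=(d_0,d_1,\ldots)$ with $\sum d_i=n$ arises as $diag^{1,k}(D)$ for some Young diagram $D$ precisely when there is a Young diagram whose $(1,k)$-diagonal slices have the prescribed sizes. So the real content is a clean combinatorial description of which sequences of diagonal-slice-sizes are realizable.

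The key observation is how the slices of a Young diagram stack: the set $\{(i,j)\in D\mid i+kj=l\}$ is an interval of values of $i$ (for each fixed value of $i+kj=l$ there is at most one $j$ with $(i,j)\in D$ and $k\mid$ or not, actually the points on the line $i+kj=l$ in $\Z_{\ge0}^2$ are $(l,0),(l-k,1),(l-2k,2),\ldots$, and membership in a Young diagram is "downward-left closed"), so $D$ contains a contiguous initial segment of these. The first step is therefore to record, for each residue class and each diagonal, the precise containment relations forced by $D$ being a Young diagram: a point $(i,j)$ lies in $D$ iff all points $(i',j')$ with $i'\le i$, $j'\le j$ do. Translating this to the slice-size sequence $H$, I would show that $H=diag^{1,k}(D)$ for some $D\in\mD$ if and only if $H$ satisfies: $d_{l}\le d_{l-1}$ whenever $k\nmid l$ (no new row can start), $d_l\le d_{l-1}+1$ whenever $k\mid l$ (at most one new row starts), and a matching lower-bound / "staircase" condition that guarantees the partition shape closes up consistently — which is exactly the content that $d_{l-k}-d_l+1\ge 0$, i.e. $w\ge 1$, together with $f\ge 0$. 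Combining the $k\nmid$ and $k\mid$ cases, the condition $f_i\ge 0$ packages precisely "$d_{i+\theta}-d_{i+1+\theta}\ge 0$ if $k\nmid i$, $\ge -1$ if $k\mid i$", and $w_i\ge 1$ is $d_{i-k+\theta}-d_{i+\theta}+1\ge 1$.

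More concretely, I would proceed by \emph{explicit construction} for the "if" direction: given $H$ satisfying $f_i\ge 0$, $w_i\ge 1$ for all $i>\eta-\theta$, build $D$ row by row, where the length of row $j$ is determined greedily as large as the constraints $diag^{1,k}_l(D)=d_l$ permit; the inequalities $f_i\ge0$ (rows don't grow back or grow by too much as $l$ increases) and $w_i\ge 1$ (consecutive diagonals $k$ apart don't shrink too fast, so that the partition-of-rows stays weakly decreasing) are exactly what is needed for this greedy procedure to terminate with a valid Young diagram having the prescribed diagonal sizes. For the "only if" direction, I would take an arbitrary $D$ with $diag^{1,k}(D)=H$ and read off the inequalities: the computations $|B_{l-1}(D)|\le d_{l-1}-d_l$ (resp. $\le d_{l-1}-d_l+1$) and $h_a=d'_{l-k}-d'_l+1\le d_{l-k}-d_l+1$ from the proof of the previous Lemma already give $f_i\ge 0$ and $w_i\ge 1$ once one checks that the relevant sets are nonempty for $i$ in the stated range (this is where the definition of $\eta$ enters: for $i\le \eta-\theta$ the extremal diagonal forces equalities that make $f_i,w_i$ automatically in range, and for $i>\eta-\theta$ the presence of an actual box on the diagonal forces the inequality). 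The role of $\eta$ and $\theta$ is purely bookkeeping: $\theta$ shifts to start counting at a residue-$(k-1)$ index, and $\eta$ marks where the "staircase is maximal", beyond which the genuine constraints kick in.

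The main obstacle I anticipate is getting the boundary/index bookkeeping exactly right — in particular, verifying that the cutoff "$i>\eta-\theta$" is precisely the range where both conditions are non-vacuous and simultaneously sufficient, and that for $i\le\eta-\theta$ no constraint is needed because $\eta$ being the largest index with $d_i=[i/k]+1$ pins down all earlier slices to their maximal staircase values. I expect the core combinatorial equivalence (realizability $\Leftrightarrow$ the row-length sequence obtained from $H$ is a weakly decreasing sequence of non-negative integers) to be essentially immediate once the greedy construction is set up; the delicate part is confirming that the conventions $d_i=0$ for $-k\le i\le -1$, $d_{-k-1}=-1$ make the edge cases of the construction go through without a separate argument.
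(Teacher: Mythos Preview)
Your plan matches the paper's proof exactly: the paper asserts (``it is easy to check'') that the set is nonempty iff for every $i>\eta$ one has $d_i\le d_{i-k}$, $d_i\le d_{i-1}$ when $k\nmid i$, and $d_i\le d_{i-1}+1$ when $k\mid i$, and then observes that these three conditions are literally $w_i\ge 1$ and $f_i\ge 0$ for $i>\eta-\theta$ after the index shift by~$\theta$ --- the same translation you wrote out. Your greedy row-by-row construction is the natural way to flesh out the ``easy to check'' direction; one small caution is that invoking the containment $F(D)_i\subset R(f_i,w_i)$ from the preceding lemma does not by itself force $f_i\ge 0$ or $w_i\ge 1$ (it is vacuous when $F(D)_i=\emptyset$), so for the ``only if'' direction stick with the direct diagonal-comparison argument you also sketch.
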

\begin{proof}
It is easy to check that the set $\{D\in\mD|diag^{1,k}(D)=H\}$ is not empty if and only if for any $i>\eta$ the following three conditions hold: 1) $d_i\le d_{i-k}$; 2) if $k\nmid i$, then $d_i\le d_{i-1}$; 3) if $k\mid i$, then $d_i\le d_{i-1}+1$. These conditions are equivalent to the condition of the lemma.
\end{proof}
Consider a sequence of Young diagrams $P=(P_0,P_1,\ldots)$ such that $P_i\in\mD(f_i,w_i)$ (a short notation for that will be $P\in\prod_{i\ge 0}\mD(f_i,w_i)$). Let $\nu(P)$ be the largest $i$ such that $c_0(P_i)=w_i$. The number $\nu(P)$ is well-defined since $w_0=0$, but it can be equal to $\infty$. It is easy to see that if $P=F(D)$, then $\nu(P)<\infty$.
\begin{lemma}\label{row}
Let $D$ be a Young diagram such that $diag^{1,k}(D)=H$. Then $r_0(D)=\theta(H)+\nu(F(D))+1$.
\end{lemma}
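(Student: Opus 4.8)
The plan is to relate the number of boxes in the bottom row of $D$ directly to the data carried by the sequence $F(D)$, and the natural bookkeeping device is the set of indices $j$ for which $r_j(D)\neq 0$, sorted by the value $kj+r_j(D)-1$ (i.e.\ by the last diagonal hit by row $j$). First I would observe that $r_0(D)$ equals the number of rows of $D$, which in turn equals $1+\max\{j\mid r_j(D)\neq 0\}$; so it suffices to identify which diagonal index $m$ the top nonempty row lands on and how $\theta$ and $\nu$ assemble to recover it.

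Next I would unwind the definition of $F$. Recall $F(D)_i=D_{\lambda(D,i+\theta)}$, where $\lambda(D,m)=(h_{j_1}(D),h_{j_2}(D),\ldots)$ indexed by $B_m(D)=\{j\mid r_j(D)\neq 0,\ kj+r_j(D)-1=m\}$. Thus the number of rows of $F(D)_i$, i.e.\ the height $c_0(F(D)_i)$, equals $|B_{i+\theta}(D)|$ \emph{provided} every $h_{j}(D)$ that appears is strictly positive; I would check (as in the proof of the previous lemma, where $h_a=d'_{l-k}-d'_l+1$) that $h_j(D)\ge 1$ for every $j\in B_m(D)$, so no zero parts are silently dropped and $c_0(F(D)_i)=|B_{i+\theta}(D)|$ exactly. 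Now $c_0(F(D)_i)=w_i=d_{i-k+\theta}-d_{i+\theta}+1$ precisely when $|B_{i+\theta}(D)|$ is maximal — which, combining with the counting $|B_l(D)|\le d_{l-k}-d_l+1$ from the previous lemma, happens iff diagonal $l=i+\theta$ actually appears as the last diagonal of the topmost possible row, i.e.\ iff the highest nonempty row of $D$ has its last box on diagonal $l$. Hence $\nu(F(D))=\max\{i\mid c_0(F(D)_i)=w_i\}$ picks out $i_0$ with $i_0+\theta=$ (diagonal index of the last box of the top row of $D$). One then has to translate "diagonal index of the last box of row $j_{\max}$" back into "$j_{\max}$", using $kj+r_j(D)-1=m$ together with the structure imposed by $diag^{1,k}(D)=H$; the point is that for the topmost row $j_{\max}$ one has $r_{j_{\max}}(D)=d_{k j_{\max}}$ (the top row is never "pushed in"), and combined with $k\mid$-type congruence bookkeeping this gives $k j_{\max} \le i_0+\theta < k(j_{\max}+1)$, so that $j_{\max}$ is recovered and $r_0(D)=j_{\max}+1=\theta+\nu(F(D))+1$ after tracking how $\theta$ (defined as the largest $i\le\eta$ with $i\equiv k-1\bmod k$) absorbs the residue.

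I expect the main obstacle to be the congruence bookkeeping in that last translation step: showing cleanly that $\lfloor (i_0+\theta)/k\rfloor = j_{\max}$ and that the "$+1$"s coming from the $k\mid i$ cases in the definitions of $f_i$ and $\tau$ line up with the shift by $\theta$, rather than the conceptual content, which is just "$r_0(D)$ = one more than the top row index = one more than (something read off from where $F(D)$ stops achieving full height)". A secondary subtlety is the $\nu(F(D))<\infty$ issue already flagged before the lemma: one should note that since $D$ is a genuine (finite) Young diagram, only finitely many $B_m(D)$ are nonempty and only finitely many have maximal cardinality, so $\nu$ is a well-defined integer and the argument above is not vacuous. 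Once those indices are pinned down, the identity $r_0(D)=\theta(H)+\nu(F(D))+1$ falls out.
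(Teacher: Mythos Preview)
Your proposal rests on two misidentifications that derail the argument.

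First, $r_0(D)$ is \emph{not} the number of rows of $D$. In the paper's conventions $r_l(D)=|\{(i,j)\in D\mid j=l\}|$, so $r_0(D)$ is the length of the bottom row $j=0$; the number of nonempty rows is $c_0(D)$. So the opening sentence ``$r_0(D)$ equals the number of rows of $D$, which in turn equals $1+\max\{j\mid r_j(D)\ne 0\}$'' is false, and everything built on locating the \emph{top} nonempty row is aiming at the wrong quantity.

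Second, $c_0(F(D)_i)$ is not $|B_{i+\theta}(D)|$. Since $F(D)_i=D_{\lambda(D,i+\theta)}$ and $c_0(D_\lambda)=\lambda_0$, we have $c_0(F(D)_i)=h_{j_1}(D)$ with $j_1=\min B_{i+\theta}(D)$, i.e.\ the \emph{largest part} of $\lambda(D,i+\theta)$, not the number of parts. The number of parts is $r_0(F(D)_i)$, and it is bounded by $f_i$ (this is the content of the first half of the preceding lemma's proof), whereas $c_0(F(D)_i)$ is bounded by $w_i$ via $h_a\le d_{l-k}-d_l+1$. You have swapped the two bounds.

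Because of these swaps, the condition $c_0(F(D)_i)=w_i$ defining $\nu$ has nothing to do with $|B_{i+\theta}|$ being maximal or with the top row. The correct mechanism, which the paper uses, is: if $a\in B_l(D)$ satisfies $h_a(D)=d_{l-k}-d_l+1$, then by tracing the ``staircase'' of boxes $(r_a(D)-1+kj,\,a-j)$ for $0\le j\le a$ one gets $(l,0)\in D$, hence $r_0(D)\ge l+1$; conversely $0\in B_{r_0(D)-1}(D)$ and $h_0(D)=d_{r_0(D)-1-k}-d_{r_0(D)-1}+1$ always attains the bound. Thus the largest $l$ with $c_0(F(D)_{l-\theta})=w_{l-\theta}$ is exactly $r_0(D)-1$, giving $\nu=r_0(D)-1-\theta$. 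No congruence bookkeeping with $\theta$ is needed beyond the index shift $l\mapsto l-\theta$; the ``residue absorption'' you anticipate is not part of the argument.
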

\begin{proof}
Consider a number $a\in B_l(D)$. Suppose that $h_a(D)=d_{l-k}-d_l+1$. Then for any $0\le j\le a$ we have $(r_a(D)-1+kj,a-j)\in D$. In particular, $(0,l)\in D$. Hence $r_0(D)\ge l+1$. On the other hand, $h_0(D)=d_{r_0(D)-1-k}-d_{r_0(D)-1}+1$. This completes the proof of the lemma.
\end{proof}
For a Young diagram $D$ let $D(a,b)=\{(i,j)\in\Z_{\ge 0}^2|(i+a,j+b)\in D\}$. Consider an arbitrary Young diagram $D$ such that $diag^{1,k}(D)=H$. Let $D'=D(0,1),H'=diag^{1,k}(D'),F(D)=(P_0,P_1,\ldots),F(D')=(P_0',P_1',\ldots)$, $f_i'=f_i(H'),w_i'=w_i(H'),\theta'=\theta(H'),\nu=\nu(P),\nu'=\nu(P')$. 
\begin{lemma}\label{change}
$$
d_i'=\begin{cases} 
				d_{i+k}-1, &\text{if $i+k\le\nu+\theta$},\\
     		d_{i+k},   &\text{if $i+k>\nu+\theta$}.
     	\end{cases}
$$
1) If $\nu\ge k$ or $w_k\ge 2$, then 
\begin{align*}
&\theta'=\theta-k; &
&P_i'=\begin{cases}
				P_i,&\text{if $i\ne\nu$},\\
				P_i(1,0),&\text{if $i=\nu$};
      \end{cases}\\
&f_i'=\begin{cases}
				f_i,&\text{if $i\ne\nu$},\\
				f_i-1,&\text{if $i=\nu$};
      \end{cases}&
&w_i'=\begin{cases}
				w_i,&\text{if $i\notin[\nu+1,\nu+k]$},\\
				w_i-1,&\text{if $i\in[\nu+1,\nu+k]$};
      \end{cases}
\end{align*}
2) If $\nu\le k-1$ and $w_k=1$, then 
\begin{align*}
&\theta'=\theta;&
&P_i'=P_i+k;\\
&f_i'=f_{i+k};&
&w_i'=\begin{cases}
				w_i,&\text{if $i>\nu$},\\
				w_i-1,&\text{if $i\le\nu$}.
      \end{cases}
\end{align*}
\end{lemma}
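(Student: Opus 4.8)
The plan is to unwind everything from the definitions of $F$, $B_m$, $h_m$, $\lambda(D,m)$ and $\theta$ applied to the shifted diagram $D' = D(0,1)$, and to track how deleting the bottom row $j=0$ of $D$ affects (a) the diagonal sums $d_i$, (b) the statistics $f_i,w_i$, (c) the index $\theta$, and (d) each partition $P_i = \lambda(D,i+\theta)$. First I would establish the formula for $d_i'$: since $D' = D(0,1)$, a box $(i,j)\in D'$ corresponds to $(i,j+1)\in D$, so $diag^{1,k}_m(D') = diag^{1,k}_{m+k}(D) - r_0(D)\cdot[\,?\,]$, more precisely $d_m' = d_{m+k} - (\text{number of boxes of }D\text{ on the line }i+kj = m+k\text{ with }j=0)$. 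That correction term is $1$ exactly when $x^{m+k}\notin I$, i.e.\ when $m+k \le r_0(D) - 1$; by Lemma~\ref{row}, $r_0(D) = \theta + \nu + 1$, so the correction is $1$ iff $m+k \le \nu+\theta$. This gives precisely the displayed case distinction for $d_i'$.

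Next I would read off $\theta' = \theta(H')$ from $\eta(H')$ and the residue condition $i\equiv k-1\bmod k$. Deleting the bottom row shifts the ``profile'' by $k$ in the diagonal index, so generically $\eta(H') = \eta(H) - k$ and hence $\theta' = \theta - k$; the exceptional behavior occurs exactly when removing the row does \emph{not} drop $\eta$ by a full $k$, which is governed by whether the top diagonal block survives — this is the dichotomy ``$\nu\ge k$ or $w_k\ge 2$'' versus ``$\nu\le k-1$ and $w_k = 1$.'' I would verify this by writing $w_k(H) = d_{\theta} - d_{k+\theta} + 1$ and checking directly, using the formula for $d_i'$ already obtained, what $\eta(H')$ and $\theta(H')$ become in each case. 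Once $\theta'$ is pinned down, the identities $F(D')_i = D_{\lambda(D',i+\theta')}$ must be matched against $F(D)_i = D_{\lambda(D,i+\theta)}$: the key point is that $B_m(D')$ and $B_m(D)$ are related by $B_m(D') \approx B_{m+k}(D)$ with at most the single element $j=0$ removed or re-indexed, and $h_j(D') = h_j(D)$ for $j\ge 1$ while $h_0(D)$ is lost. Carefully bookkeeping which partition $\lambda(D,m)$ loses its first part (this happens precisely at the ``active'' index, which is $\nu$ in case 1 and shifts the whole family in case 2) yields the stated formulas $P_i' = P_i$ (or $P_i(1,0)$ at $i=\nu$) in case 1, and $P_i' = P_i + k$ in case 2 — here ``$+k$'' should be read as the re-indexing of the sequence, i.e.\ $P_i' = P_{i+k}$ combined with the $\theta' = \theta$ normalization. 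Finally, the formulas for $f_i' = f_i(H')$ and $w_i' = w_i(H')$ follow mechanically by substituting the formula for $d_i'$ into the definitions $w_i(H') = d_{i-k+\theta'}' - d_{i+\theta'}' + 1$ and $f_i(H') = d_{i+\theta'}' - d_{i+1+\theta'}'(+1)$, using the case split for $d_i'$ and for $\theta'$; the interval $[\nu+1,\nu+k]$ (case 1) and $[0,\nu]$ (case 2) where $w_i$ drops by $1$ arises exactly as the range of indices for which one of the two diagonal arguments falls below the threshold $\nu+\theta$ while the other does not.

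I expect the main obstacle to be the case analysis for $\theta'$ and the corresponding re-indexing of the partition sequence in case 2: keeping straight the interaction between the shift ``$m\mapsto m+k$'' coming from deleting a row, the definition of $\eta$ (which involves the floor $\lfloor i/k\rfloor$), and the congruence condition defining $\theta$, is where sign/off-by-$k$ errors are most likely. A useful sanity check at each stage will be to confirm that $diag^{1,k}(D') = H'$ is consistent with $D'\in\prod_i \mD(f_i',w_i')$ via Lemma~\ref{lemma:not empty}, and that $\nu' = \nu(F(D'))$ comes out as $\nu-1$ in case 1 (since the active block shrank) and $\nu$ in case 2, which will be needed for the inductive arguments in the following subsections.
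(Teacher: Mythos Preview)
Your approach is correct and coincides with the paper's own proof, which is the single line ``clear from Lemma~\ref{row} and the definition of the map $F$'': you are simply spelling out that unwinding. One small caution on your final sanity check: in case~1 the conclusion you actually need (and the one the later inductive arguments use) is only $\nu'+\theta'\le\nu+\theta$, i.e.\ $\nu'\le\nu+k$, not $\nu'=\nu-1$; the latter need not hold, since for $i\in[\nu+1,\nu+k]$ both $w_i$ and possibly $c_0(P_i)$ can drop together.
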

\begin{proof}
The proof is clear from Lemma \ref{row} and the definition of the map $F$.
\end{proof}

\subsection{Injectivity of $F$}\label{subsection:injectivity}

\begin{lemma}\label{injection}
The map $F\colon\{D\in\mD|diag^{1,k}(D)=H\}\to\prod_{i\ge 0}\mD(f_i,w_i)$ is injective.
\end{lemma}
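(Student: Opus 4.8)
\textbf{Proof plan for Lemma \ref{injection}.}

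The plan is to reconstruct the Young diagram $D$ from the sequence $F(D)=(P_0,P_1,\ldots)$ by descending induction on the number of rows $r_0(D)$, peeling off the bottom row at each step. The induction is driven by Lemma \ref{row}, which computes $r_0(D)=\theta(H)+\nu(F(D))+1$ purely in terms of $H$ (fixed) and the combinatorial invariant $\nu(F(D))$ of the sequence; thus the number of rows is determined by $F(D)$, and in particular if $F(D)=F(D')$ then $D$ and $D'$ have the same number of rows.

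First I would set up the base case: if $r_0(D)=0$ then $D$ is empty and there is nothing to prove; more to the point, once we have shown $r_0(D)$ depends only on $F(D)$, we may induct on this common value. For the inductive step, pass from $D$ to $D'=D(0,1)$, the diagram obtained by deleting the bottom row. The key fact is that $F(D')$ is \emph{determined by} $F(D)$: this is exactly the content of Lemma \ref{change}, which expresses $(P_0',P_1',\ldots)=F(D')$ (together with $H'=\mathrm{diag}^{1,k}(D')$, $\theta'$, and the $f_i',w_i'$) explicitly in terms of $(P_0,P_1,\ldots)$, $H$, $\theta$ and $\nu=\nu(F(D))$ — and the case split there ($\nu\ge k$ or $w_k\ge 2$ versus $\nu\le k-1$ and $w_k=1$) is itself governed by data visible in $F(D)$. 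Hence if $F(D_1)=F(D_2)$ for two diagrams with $\mathrm{diag}^{1,k}=H$, then $\mathrm{diag}^{1,k}(D_1')=\mathrm{diag}^{1,k}(D_2')=H'$ and $F(D_1')=F(D_2')$, so by the induction hypothesis (applied with $H'$ in place of $H$) $D_1'=D_2'$. It remains to recover the deleted bottom row, i.e. the value $r_0(D)$ itself, but we have already noted that this equals $\theta(H)+\nu(F(D))+1$, which is the same for $D_1$ and $D_2$. Reattaching a bottom row of this common length to $D_1'=D_2'$ gives $D_1=D_2$.

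The one point needing care — and the main obstacle — is verifying that the bottom row of $D$ really is recovered correctly: namely that $D$ is obtained from $D'$ by adjoining a single row whose length is $r_0(D)=\theta(H)+\nu(F(D))+1$, and that this operation is the exact inverse of $D\mapsto D(0,1)$. Since $D$ is a Young diagram, it is the union of $D'$ (shifted up by one) with its bottom row $\{(i,0)\mid 0\le i\le r_0(D)-1\}$, so the only thing to check is the consistency of the length $r_0(D)$ with $D'$, i.e. that $r_0(D)\ge r_1(D)=r_0(D')$, which holds automatically for a genuine Young diagram and is preserved under the reconstruction. Thus the reconstruction procedure $D\mapsto(r_0(D),D')\mapsto D$ is well-defined and inverts $F$ on its image, proving injectivity. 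All the substantive work has already been done in Lemmas \ref{row} and \ref{change}; the present lemma is their formal consequence via this descending induction.
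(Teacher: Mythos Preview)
Your argument is correct and matches the paper's proof essentially line for line: determine $r_0(D)$ from $F(D)$ via Lemma~\ref{row}, determine $H'$ and $F(D')$ from $F(D)$ via Lemma~\ref{change}, recover $D'$ by induction, and reassemble $D$ from $D'$ and $r_0(D)$. The only slip is terminological: you write ``descending induction on the number of rows $r_0(D)$'', but $r_0(D)$ is the \emph{length} of the bottom row, not the number of rows (that would be $c_0(D)$); the paper simply inducts on $|D|$, which avoids the ambiguity and is what your argument actually uses.
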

\begin{proof}
The proof is by induction on $|D|$. For $|D|=0$, there is nothing to prove. Assume that $|D|>0$. Using Lemma \ref{change}, we can reconstruct $F(D')$. By the inductive assumption, we can reconstruct $D'$. From Lemma \ref{row} it follows that $F(D)$ determines $r_0(D)$. The diagram $D'$ and the number $r_0(D)$ determines $D$. This completes the proof of the lemma.
\end{proof}

\subsection{The image of $F$}\label{subsection:image}

Consider a sequence $P\in\prod_{i\ge 0}\mD(f_i,w_i)$. For a number $i\ge 0$ let $\Phi_P(i)$ be the minimal $j>i$ such that $r_0(P_j)<f_j$. If for any $j>i$ we have $r_0(P_j)=f_j$, then we put $\Phi_P(i)=\infty$.  
\begin{lemma}\label{condition}
Let $D$ be a Young diagram such that $diag^{1,k}(D)=H$, then for any $i\ge 0$ we have $\Phi_{F(D)}(i)-i\le k$.
\end{lemma}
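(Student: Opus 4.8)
Lemma \ref{condition} asserts that for a Young diagram $D$ with $diag^{1,k}(D)=H$, the "gap function" $\Phi_{F(D)}$ never jumps by more than $k$: for all $i\ge 0$ we have $\Phi_{F(D)}(i)-i\le k$.

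The plan is to argue by induction on $|D|$, imitating the inductive strategy already used in Lemma \ref{injection}, and to feed in the precise bookkeeping provided by Lemma \ref{change}. For $|D|=0$ there is nothing to prove. For $|D|>0$, set $D'=D(0,1)$, $H'=diag^{1,k}(D')$, $F(D)=(P_0,P_1,\ldots)$, $F(D')=(P_0',P_1',\ldots)$, and let $\nu=\nu(F(D))$, $f_i=f_i(H)$, $w_i=w_i(H)$, $\theta=\theta(H)$, $\theta'=\theta(H')$ in the notation of Lemma \ref{change}. The inductive hypothesis gives $\Phi_{F(D')}(i)-i\le k$ for all $i\ge 0$, and I would translate this into the desired bound on $\Phi_{F(D)}$ by tracking how the quantity $r_0(P_j)-f_j$ — which is what controls whether $j$ is a "stopping index" for $\Phi$ — changes as we pass from $D$ to $D'$. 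The key observation is that the condition "$r_0(P_j)<f_j$" is exactly preserved for all indices $j$ except possibly a small controlled window near $\nu$: in case (1) of Lemma \ref{change} we have $P_i'=P_i$ and $f_i'=f_i$ for $i\ne\nu$, while at $i=\nu$ both $r_0(P_\nu)$ and $f_\nu$ drop by $1$ (since $P_\nu'=P_\nu(1,0)$ and $f_\nu'=f_\nu-1$), so $r_0(P_\nu')-f_\nu'=r_0(P_\nu)-f_\nu$ and the stopping set of $\Phi$ is literally unchanged after the shift $\theta'=\theta-k$; in case (2), $P_i'=P_i+k$ and $f_i'=f_{i+k}$, so $r_0(P_i')-f_i'=(r_0(P_i)+k)-f_{i+k}$, which is a genuine but completely explicit reindexing, and here $\theta'=\theta$.

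The main point I expect to be the real work is the base of the induction disguised as an edge case: I must separately handle the index $i=r_0(D)-1-\theta$ (equivalently, show that $\Phi_{F(D)}(i)$ for $i$ just below $\nu$, and near the top row, is controlled), using Lemma \ref{row}, which tells us $r_0(D)=\theta+\nu+1$, together with the defining property of $\nu$ that $c_0(P_\nu)=w_\nu$ and $c_0(P_j)<w_j$ for $j>\nu$. Concretely, what makes $\Phi$ stay within $k$ is that the indices $j$ with $k\mid j+\theta$ (equivalently $j\equiv 0 \bmod k$ after the shift) automatically satisfy $r_0(P_j)<f_j$ unless the corresponding row of $D$ is "saturated": the extra $+1$ in the definition of $f_i$ when $k\mid i$ is precisely the arithmetic slack that forces a stopping index to occur within every block of $k$ consecutive values of the index. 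So the hard part is really the combinatorial verification that among any $k$ consecutive indices $i+1,\ldots,i+k$, at least one $j$ has $r_0(P_j)<f_j$; I would prove this directly from the definitions of $B_m(D)$, $h_m(D)$, and $f_i$, using that if $r_0(P_j)=f_j$ for all $j$ in such a block then summing the $B$-set sizes telescopes to force a contradiction with $diag^{1,k}(D)=H$ (more precisely, it would force the $(0,\cdot)$-column of $D$ to be longer than $r_0(D)$ allows).

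Thus the proof reduces to two pieces: (i) the inductive step, which is a mechanical transfer of the hypothesis through the explicit formulas of Lemma \ref{change} — in case (1) the stopping set is shifted rigidly by $k$, in case (2) it is reindexed transparently; and (ii) the direct combinatorial claim that every window of $k$ consecutive indices contains a stopping index, which handles the indices $i$ near the top of $D$ not covered by the inductive step. I would present (ii) first as a standalone sublemma about the numbers $r_0(P_j)$ and $f_j$ (it is essentially a restatement of conditions 2) and 3) in the proof of Lemma \ref{lemma:not empty}), and then conclude (i) in a few lines. The only genuine subtlety is keeping the index shifts $\theta\mapsto\theta-k$ versus $\theta\mapsto\theta$ straight between the two cases, and checking that in case (1) the window $[\nu+1,\nu+k]$ where $w_i$ drops does not create a spurious new stopping index for $\Phi$ — but $\Phi$ only sees the $r_0$ versus $f$ comparison, not the $w_i$'s, so this is immediate.
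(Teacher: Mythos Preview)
Your overall architecture---induction on $|D|$ via Lemma \ref{change}, tracking how the condition $r_0(P_j)<f_j$ transforms---is exactly the paper's approach, and your observation that in case (1) both $r_0(P_\nu)$ and $f_\nu$ drop by $1$ so that the difference is preserved is the right mechanism.

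However, your handling of the edge case is too vague, and in one place circular. The paper's case division is \emph{not} the case (1)/case (2) split of Lemma \ref{change}; it is instead: either $\nu>\eta-\theta$, or $\nu=\eta-\theta$ with $f_{\eta-\theta}\ge 2$, versus the residual case $\nu=\eta-\theta$ with $f_{\eta-\theta}=1$. In the first two situations the equivalence $r_0(P_i)<f_i \Leftrightarrow r_0(P_i')<f_i'$ holds for \emph{all} $i$, and the lemma follows directly from the inductive hypothesis. In the residual case one needs only to check the single index $i=\eta-\theta$, and the paper does this by a short contradiction: assuming $\Phi_P(\eta-\theta)>\eta-\theta+k$, one deduces successively $w_{\eta-\theta+1}=1$, $f_{\eta-\theta+1}=0$, \ldots, $w_{\eta-\theta+k}=1$, $f_{\eta-\theta+k}=0$, and hence $w_{\eta-\theta+k+1}=0$, contradicting Lemma \ref{lemma:not empty}. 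Note that this argument uses the $w_i$'s (via the defining property of $\nu$) as well as the $f_i$'s; your sketch of (ii) speaks only of the $f_i$'s and a ``telescoping of $B$-set sizes'', which is not where the contradiction comes from.

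Your proposed sublemma (ii)---``among any $k$ consecutive indices there is a stopping index''---is literally the statement of Lemma \ref{condition}, so presenting it as a standalone ingredient is circular. What is actually needed is the single-index check above. Also, in case (2) of Lemma \ref{change} the formula should be read as the index shift $P_i'=P_{i+k}$ (matching $f_i'=f_{i+k}$); your computation $r_0(P_i')-f_i'=(r_0(P_i)+k)-f_{i+k}$ treats it as literal addition and does not make sense for Young diagrams. Finally, there is no ``shift by $k$'' of the stopping set in case (1): although $\theta'=\theta-k$, the sequences $P'$ and $f'$ are indexed by the same $i$ as $P$ and $f$, so the comparison is pointwise, not shifted.
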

\begin{proof}
The proof is by induction on $|D|$. For $|D|=0$, there is nothing to prove. Assume that $|D|>0$. We use the notations of Lemma~\ref{change}. Suppose that $\nu>\eta-\theta$ or $\nu=\eta-\theta, f_{\eta-\theta}\ge 2$. From Lemma \ref{change} it follows that for any $i\ge 0$ we have $r_0(P_i)<f_i\Leftrightarrow r_0(P'_i)<f'_i$. Thus, Lemma \ref{condition} follows from the inductive assumption. Assume that $\nu=\eta-\theta$ and $f_{\eta-\theta}=1$. From Lemma \ref{change} it follows that we must only prove that $\Phi_P(\eta-\theta)-(\eta-\theta)\le k$. Assume the converse. Clearly, $w_{\eta-\theta+1}=1$. From the definition of the number $\nu$ and the assumption $\Phi_P(\eta-\theta)-(\eta-\theta)>k$ it follows that $f_{\eta-\theta+1}=0$. Continuing in the same way, we see that $w_{\eta-\theta+1}=w_{\eta-\theta+2}=\ldots=w_{\eta-\theta+k}=1$ and $f_{\eta-\theta+1}=f_{\eta-\theta+2}=\ldots=f_{\eta-\theta+k}=0$. Clearly, $w_{\eta-\theta+k+1}=0$, but this contradicts Lemma \ref{lemma:not empty}. 
\end{proof}
\begin{proposition}\label{proposition:bijection}
Suppose $\{D\in\mD|diag^{1,k}(D)=H\}\ne\emptyset$, then the map
\begin{gather*}
F\colon\{D\in\mD|diag^{1,k}(D)=H\}\to\left\{\left.P\in\prod_{i\ge 0}\mD(f_i,w_i)\right|\begin{smallmatrix}\forall i\ge 0:\\ \Phi_P(i)-i\le k\end{smallmatrix}\right\}.
\end{gather*}
is a bijection such that $|\{s\in D|l(s)=k(a(s)+1)\}|=\sum_{i\ge 0}|F(D)_i|$.
\end{proposition}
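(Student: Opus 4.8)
The plan is to prove Proposition~\ref{proposition:bijection} by induction on $|D|$, using the structural analysis already assembled in Lemmas \ref{lemma:not empty}--\ref{condition}. Injectivity of $F$ is already Lemma~\ref{injection}, and Lemma~\ref{condition} shows that the image of $F$ is contained in the target set $\left\{P\in\prod_{i\ge 0}\mD(f_i,w_i)\mid \forall i\ge 0:\ \Phi_P(i)-i\le k\right\}$. So the only thing that remains for the bijectivity claim is surjectivity, and the weight identity $|\{s\in D\mid l(s)=k(a(s)+1)\}|=\sum_{i\ge 0}|F(D)_i|$ must be checked separately (though it too will fall out of the same induction).

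For surjectivity, I would argue as follows. Given $P=(P_0,P_1,\ldots)$ in the target set, I want to produce $D$ with $F(D)=P$. The idea is to invert the passage $D\mapsto D'=D(0,1)$ used in Lemma~\ref{change}. First, from $P$ one reads off $\nu=\nu(P)$ (well-defined and finite, since $w_0=0$ forces $c_0(P_0)=w_0$ trivially only if $w_0=0$, and the condition $\Phi_P(i)-i\le k$ together with Lemma~\ref{lemma:not empty} prevents $\nu=\infty$ — this is exactly the contradiction extracted in the proof of Lemma~\ref{condition}). Knowing $\nu$, the two cases of Lemma~\ref{change} tell me precisely which sequence $P'$ must be $F(D')$: in case (1) ($\nu\ge k$ or $w_k\ge 2$) I set $P_i'=P_i$ for $i\ne\nu$ and $P_\nu'=P_\nu(1,0)$, with the shifted data $\theta'=\theta-k$, $f_i',w_i'$ as listed; in case (2) ($\nu\le k-1$ and $w_k=1$) I set $P_i'=P_{i+k}$ with $\theta'=\theta$. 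One checks that in each case $P'$ lies in the target set for the shifted sequence $H'$ (this uses the displayed formula for $d_i'$ in Lemma~\ref{change}, the inequalities of Lemma~\ref{lemma:not empty} for $H'$, and that the $\Phi$-bound is preserved — again exactly the bookkeeping in the proof of Lemma~\ref{condition}). By the inductive hypothesis there is a unique $D'$ with $diag^{1,k}(D')=H'$ and $F(D')=P'$. Then I reconstruct $D$ from $D'$ by prepending a $0$-th row of length $r_0(D)=\theta(H)+\nu+1$ (Lemma~\ref{row}): concretely $D=\{(i,j)\mid j\ge 1,\ (i,j-1)\in D'\}\cup\{(i,0)\mid 0\le i\le \theta+\nu\}$. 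A short check shows this $D$ is a genuine Young diagram, that $diag^{1,k}(D)=H$, and — by tracing the definition of $F$ through one more row — that $F(D)=P$.

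The weight identity is then handled inside the same induction. Decompose $|\{s\in D\mid l(s)=k(a(s)+1)\}|$ according to the row of $s$. The boxes $s=(i,j)$ with $j\ge 1$ contribute, after the shift $(i,j)\mapsto(i,j-1)$, exactly $\sum_{i\ge 0}|F(D')_i|$, which by induction equals $|\{s\in D'\mid l(s)=k(a(s)+1)\}|$. The boxes in the $0$-th row contribute $h_0(D)$; and comparing $F(D)$ with $F(D')$ via Lemma~\ref{change} (in case (1) the diagram $P_\nu$ has exactly one more column, of height $c_0(P_\nu)=w_\nu$... more precisely one checks $|F(D)_\nu|-|F(D')_\nu|$ accounts for the new row boxes; in case (2) the reindexing $P_i'=P_{i+k}$ together with the new diagrams in positions $0,\ldots,k-1$ does the bookkeeping), one verifies $\sum_i|F(D)_i|-\sum_i|F(D')_i|$ equals that same contribution from the new row. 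Matching the two decompositions gives the identity.

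The main obstacle is the case analysis in the surjectivity step: I must confirm that the reconstructed shifted data $(H',P')$ genuinely satisfies all hypotheses needed to invoke the inductive hypothesis — the non-emptiness criterion of Lemma~\ref{lemma:not empty} for $H'$, the containments $P_i'\in\mD(f_i',w_i')$, and the $\Phi$-bound $\Phi_{P'}(i)-i\le k$ — and that the pushed-back diagram $D$ is legitimately a partition with $diag^{1,k}(D)=H$. All of this is implicit in the earlier lemmas but requires careful unwinding of the two regimes (whether or not $\nu\ge k$, $w_k\ge 2$), and the edge behaviour near $i=\eta-\theta$ where the contradiction with Lemma~\ref{lemma:not empty} is invoked. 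Once that bookkeeping is in place, both the bijectivity and the weight identity follow by the clean induction sketched above.
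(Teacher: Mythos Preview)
Your surjectivity argument is exactly the paper's: induct on $n$, read off $\nu=\nu(P)$, manufacture $(H',P')$ via the formulas of Lemma~\ref{change}, invoke the inductive hypothesis to get $D'$, and then prepend a bottom row of length $\theta+\nu+1$. The paper organises the crucial check (that $\{D\mid diag^{1,k}(D)=H'\}\ne\emptyset$) not along the two cases of Lemma~\ref{change} but along the dichotomy $\nu=\eta-\theta$ versus $\nu>\eta-\theta$; in the latter case one shows $w_i\ge 2$ for $\nu<i\le\nu+k$ by noting that $w_i=1$ would force $\sum_{j=1}^k f_{i-j}=1$, hence $\Phi_P(i-k-1)=i$, contradicting the $\Phi$-bound. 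The check that the prepended row yields a genuine Young diagram is precisely $r_0(D')\le\theta+\nu+1$, which follows from Lemma~\ref{row} applied to $D'$ together with $\nu'+\theta'\le\nu+\theta$ from Lemma~\ref{change}. So your outline is correct and matches the paper once these two verifications are made explicit.

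Where you work harder than necessary is the weight identity. The paper dispatches it in one line, directly from the definition of $F$: for each nonempty row $j$ of $D$ we have $j\in B_m(D)$ for the single value $m=kj+r_j(D)-1$, and $h_j(D)$ is by definition the number of boxes $s$ in row $j$ with $l(s)=k(a(s)+1)$; hence
\[
\sum_{i\ge 0}|F(D)_i|=\sum_{m}\sum_{j\in B_m(D)}h_j(D)=\sum_{j\ge 0}h_j(D)=|\{s\in D\mid l(s)=k(a(s)+1)\}|.
\]
Your inductive bookkeeping via Lemma~\ref{change} would also work, but it is not needed.
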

\begin{proof}
The second statement of the proposition is clear from the definition of the map $F$. Let us prove that $F$ is a bijection. We have already proved the injectivity. Let us prove the surjectivity of the map $F$. The proof is by induction on $n=\sum_{i\ge 0}d_i$. For $n=0$, there is nothing to prove. Assume that $n\ge 1$. Consider a sequence $P\in\prod_{i\ge 0}\mD(f_i,w_i)$ such that for any $i\ge 0$ we have $\Phi_P(i)-i\le k$. Define $H'$ and $P'$ by formulas from Lemma \ref{change}.

We want to apply the inductive assumption to the sequence $H'$, so we need to check that the set $\{D\in\mD|diag^{1,k}(D)=H'\}$ is not empty. If $\nu=\eta-\theta$, then it easily follows from Lemma \ref{lemma:not empty}. Assume that $\nu>\eta-\theta$. By Lemmas \ref{change} and \ref{lemma:not empty}, we must only prove that for any $\nu<i\le\nu+k$ we have $w_i\ge 2$. Assume the converse. Hence, there exists a number $\nu<i\le\nu+k$ such that $w_i=1$. Therefore, $\sum_{j=1}^k f_{i-j}=1$. Hence, $\Phi_P(i-k-1)=i$. This contradicts the condition $\Phi_P(i-k-1)-(i-k-1)\le k$. Thus, we have prove that $\{D\in\mD|diag^{1,k}=H'\}\ne\emptyset$.  

By the inductive assumption, there exists a Young diagram $D'$ such that $diag^{1,k}(D')=H'$ and $F(D')=P'$. Let us prove that $r_0(D')\le\nu+\theta+1$. By Lemma \ref{row}, it is equivalent to $\nu'+\theta'\le\nu+\theta$ and it follows from Lemma \ref{change}. 

Let $D$ be the diagram obtained from $D'$ by adding the row of length $\nu+\theta+1$. Clearly, $F(D)=P$.   
\end{proof}

\section{Proof of Theorem \ref{main theorem}}\label{section:proof of theorem}

In this section we prove \eqref{combinatorial identity} using Proposition \ref{proposition:bijection}. 

We fix a sequence $H=(d_0,d_1,\ldots)$ such that the set $\{D\in\mD|diag^{1,k}(D)=H\}$ is not empty. We will use the following well known fact (see e.g.~\cite{Andrews})
$$
\sum_{D\in\mD(M,N)}q^{|D|}=G(M,N).
$$
Let $S(H)=\{P\in\prod_{i\ge 0}\mD(f_i,w_i)|\forall i\ge 0:\Phi_P(i)-i\le k\}$. Using Proposition \ref{proposition:bijection} and our notations we see that \eqref{combinatorial identity} is equivalent to the following formula
\begin{gather}\label{main identity}
\sum_{P\in S(H)} q^{|P|}=\frac{1-q}{1-q^{w_{\eta-\theta+1}}}\prod_{i\ge\eta-\theta+1}G(f_i,w_i-1),
\end{gather}
where $|P|=\sum_{i\ge 0}|P_i|$.
Let $\sigma(H)$ be the minimal $i\ge 0$ such that for any $j>\theta+i$ we have $d_j=0$. Let $\psi(H)$ be the maximal $i\le\sigma(H)$ such that $k\mid i$. For a sequence $P\in S(H)$ let $\phi_P(i)$ be the maximal $j<i$ such that $r_0(P_j)<f_j$. We claim that
\begin{gather}\label{extra identity}
\sum_{\substack{P\in S(H)\\\phi_P(\psi+k)=p}}q^{|P|}=q^{\sum_{i=p+1}^{\psi+k-1}f_{i}}\frac{1-q^{f_p}}{1-q^{w_{\psi+k}}}\left(\sum_{P\in S(H)}q^{|P|}\right),
\end{gather}
where $\psi\le p<\psi+k$. 

Let us prove \eqref{main identity} and \eqref{extra identity} by induction on $\sigma$. Suppose $\sigma<k$, then 
\begin{gather*}
	\sum_{P\in S(H)}q^{|P|}=\prod_{i=\eta-\theta+1}^{k-1}G(f_i,w_i)=\frac{1-q}{1-q^{w_{\eta-\theta+1}}}\prod_{i\ge\eta-\theta+1}G(f_i,w_i-1).
\end{gather*}
Hence, \eqref{main identity} is proved. It is clear that
\begin{multline*}
	\sum_{\substack{P\in S(H)\\\phi_P(k)=p}}q^{|P|}=\prod_{i=\eta-\theta+1}^p G(f_i-\delta_i^p,w_i)\prod_{i=p+1}^{k-1}q^{f_i}G(f_i,w_i-1)=\\
	=q^{\sum_{i=p+1}^{k-1}f_i}\frac{1-q^{f_p}}{1-q^{w_k}}\left(\frac{1-q}{1-q^{w_{\eta-\theta+1}}}\prod_{i\ge\eta-\theta+1}G(f_i,w_i-1)\right).
\end{multline*}
Therefore, \eqref{extra identity} is proved.

Suppose $\sigma\ge k$. For $p>\eta(H)$ let
\begin{align*}
&H(p)=(d_0(p),d_1(p),d_2(p),\ldots),\text{where}\\
&d_i(p)=\begin{cases}
					d_{kd_{p+1}+i}-d_{p+1},&\text{if $kd_{p+1}+i\le p$},\\
					0,&\text{if $kd_{p+1}+i> p$}.
				\end{cases}
\end{align*}
If $d_p\ge d_{p+1}$, then $\{D\in\mD|diag^{1,k}(D)=H(p)\}\ne\emptyset$. We adopt the following convention, $S(H(p))=\emptyset$, if $d_p<d_{p+1}$. Note that if $d_p<d_{p+1}$, then $k\mid p+1$. Let $H'=H(\theta+\sigma-1)$ and $H''=H(\theta+\psi-1)$.

Suppose $\psi=\sigma$, then obviously
\begin{gather*}
	\sum_{P\in S(H)}q^{|P|}=\left(\sum_{P'\in S(H')}q^{|P'|}\right)G(f_{\psi}-1,w_{\psi}).
\end{gather*}
By the inductive assumption, the right-hand side is equal to \\
$\frac{1-q}{1-q^{w_{\eta-\theta+1}}} \prod_{i>\eta-\theta} G(f_i,w_i-1)$. 
Suppose $\psi<\sigma$, then
\begin{multline*}
\sum_{P\in S(H)}q^{|P|}=\left(\sum_{P'\in S(H')}q^{|P'|}\right)G(f_{\sigma},w_{\sigma})+\\
+\sum_{p=\sigma-k}^{\psi-1}\left(\sum_{\substack{P''\in S(H'')\\\phi_{P''}(\psi)=p}}q^{|P''|}\right)\left(\prod_{i=\psi}^{\sigma-1}q^{f_i}G(f_i,w_i-1)\right)G(f_{\sigma}-1,w_{\sigma}).
\end{multline*}
By the inductive assumption, the right-hand side is equal to
\begin{multline*}
\frac{1-q}{1-q^{w_{\eta-\theta+1}}}\left[\prod_{i=\eta-\theta+1}^{\sigma-1}G(f_i,w_i-1)\right]\times\\
\times\left(\frac{1-q^{\sum_{i=\psi}^{\sigma-1}f_i}}{1-q}G(f_{\sigma},w_{\sigma})+\frac{1-q^{\sum_{i=\sigma-k}^{\psi-1}f_i}}{1-q}q^{\sum_{i=\psi}^{\sigma-1}f_i}G(f_{\sigma}-1,w_{\sigma})\right).
\end{multline*} 
It is easy to check that it is equal to $\frac{1-q}{1-q^{w_{\eta-\theta+1}}}\prod_{i>\eta-\theta} G(f_i,w_i-1)$.
Hence, \eqref{main identity} is proved.

Let us prove \eqref{extra identity}. Suppose $p>\sigma$, then \eqref{extra identity} is trivial because both sides are equal to zero. Suppose $p<\sigma$, then we have
\begin{gather*}
\sum_{\substack{P\in S(H)\\ \phi_P(\psi+k)=p}}q^{|P|}=\left(\sum_{\substack{P'\in S(H')\\ \phi_{P'}(\psi+k)=p}}q^{|P'|}\right)q^{f_{\sigma}}G(f_{\sigma},w_{\sigma}-1).
\end{gather*} 
By the inductive assumption, the right-hand side is equal to \\
$q^{\sum_{i=p+1}^{\psi+k-1}f_{i}}\frac{1-q^{f_{p}}}{1-q^{w_{\psi+k}}}\left(\sum_{P\in S(H)}q^{|P|}\right)$.

Suppose $p=\sigma$, then we have
\begin{multline*}
\sum_{\substack{P\in S(H)\\ \phi(\psi+k)=\sigma}}q^{|P|}=\left(\sum_{P'\in S(H')}q^{|P'|}\right)G(f_{\sigma}-1,w_{\sigma})+\\
+\sum_{u=\sigma-k}^{\psi-1}\left(\sum_{\substack{P''\in S(H'')\\\phi_{P''}(\psi)=u}}q^{|P''|}\right)\left(\prod_{i=\psi}^{\sigma-1}q^{f_i}G(f_i,w_i-1)\right)G(f_{\sigma}-1,w_{\sigma}).
\end{multline*}
By the inductive assumption, the right-hand side is equal to 
\begin{multline*}
\frac{1-q}{1-q^{w_{\eta-\theta+1}}}\left[\prod_{i=\eta-\theta+1}^{\sigma-1}G(f_i,w_i-1)\right]G(f_{\sigma}-1,w_{\sigma})\times\\
\times\left(\frac{1-q^{\sum_{i=\psi}^{\sigma-1}f_i}}{1-q}+\frac{1-q^{\sum_{i=\sigma-k}^{\psi-1}f_i}}{1-q}q^{\sum_{i=\psi}^{\sigma-1}f_i}\right).
\end{multline*} 
It is easy to check that it is equal to $\frac{1-q^{f_{\sigma}}}{1-q^{w_{\psi+k}}}\left(\sum_{P\in S(H)}q^{|P|}\right)$.
Thus, \eqref{extra identity} is proved. This completes the proof of the theorem.

\section{Proof of Theorem \ref{theorem:q,t-Catalan}}\label{section:proof of Catalan}

We need another description of the varieties $(\C^2)^{[N](k,n)}$. We define the discontinuous map $\rho\colon(\C^2)^{[N]}\to(\C^2)^{[N]}_{1,k}$ by the following formula $\rho(p)=\lim_{t\to 0}tp$, where $p\in (\C^2)^{[N]}$ and $t\in T_{1,k}$. It is easy to see that
\begin{gather*}
(\C^2)^{[N](k,n)}=\rho^{-1}\left(\coprod_{\substack{H=(d_0,d_1,\ldots)\\\sum d_i=N,d_{\ge kn-k}=0}}(\C^2)^{[N]}_{1,k}(H)\right).
\end{gather*}
Clearly, the map $\rho^{-1}\left((\C^2)^{[N]}_{1,k}(H)\right)\xrightarrow{\rho}(\C^2)^{[N]}_{1,k}(H)$ is a locally trivial bundle with an affine space as the fiber. We denote by $d^+_{1,k}(H)$ the dimension of the fiber. Therefore, we have
\begin{gather*}
\left[(\C^2)^{[N](k,n)}\right]=\sum_{\substack{H=(d_0,d_1,\ldots)\\\sum d_i=N,d_{\ge kn-k}=0}}\left[(\C^2)^{[N]}_{1,k}(H)\right]\bL^{d^+_{1,k}(H)}.
\end{gather*} 
Consider the point $p\subset(\C^2)^{[N]}_{1,k}(H)$ corresponding to a monomial ideal $I$. From \eqref{weight decomposition} it follows that
\begin{multline*}
d^+_{1,k}(H)=|\{s\in D_I|l(s)+1>ka(s)\}|+|\{s\in D_I|k(a(s)+1)>l(s)\}|=\\
=|D_I|+|\{s\in D_I|ka(s)\le l(s)<k(a(s)+1)\}|.
\end{multline*}
Obviously, the map $\pi\mapsto D_{\pi}$ is a bijection between the sets $L^+_{kn,n}$ and $\{D\in\mathcal D|diag^{1,k}_{\ge kn-k}(D)=0\}$. Hence, from \eqref{identity1} it follows that   
\begin{multline*}
\sum_{N\ge 0}\left[(\C^2)^{[N](k,n)}\right]t^N=\sum_{\substack{D\in\mathcal D\\diag^{1,k}_{\ge kn-k}(D)=0}}\bL^{|D|+|\{s\in D|ka(s)\le l(s)\le k(a(s)+1)\}|}t^{|D|}=\\
=(\bL t)^{\frac{kn(n-1)}{2}}\sum_{\pi\in L^+_{kn,n}}\bL^{b_k(\pi)}(\bL t)^{-area(\pi)}=(\bL t)^{\frac{kn(n-1)}{2}}C_n^{(k)}(\bL,\bL^{-1}t^{-1}).
\end{multline*} 
This completes the proof of the theorem.

\section{Proof of Theorem \ref{theorem:combinatorial identity}}\label{section:proof of combinatorial identity}

We use the map $\rho\colon(\C^2)^{[N]}\to(\C^2)^{[N]}_{1,k}$ and the numbers $d^+_{1,k}(H)$ from the proof of Theorem~\ref{theorem:q,t-Catalan}. We have
$$
\left[(\C^2)^{[N]}\right]=\sum_{\substack{H=(d_0,d_1,\ldots)\\\sum d_i=N}}\left[(\C^2)^{[N]}_{1,k}(H)\right]\bL^{d^+_{1,k}(H)}.
$$
It is well known (see e.g.\cite{Nakajima}) that
$$
\sum_{N\ge 0}\left[(\C^2)^{[N]}\right]t^N=\prod_{i\ge 1}\frac{1}{1-\bL^{i+1}t^i}.
$$
We know that a sequence $H$ is good if and only if $(\C^2)^{[N]}_{1,k}(H)\ne\emptyset$. The class $\left[(\C^2)^{[N]}_{1,k}(H)\right]$ is computed in Theorem~\ref{main theorem}, so we only need to prove that if $(\C^2)^{[N]}_{1,k}(H)\ne\emptyset$, then 
\begin{gather}\label{formula:plus dimention}
d^+_{1,k}(H)=\sum_{i\ge 0} d_i+\sum_{i\ge\eta}e_i\left(\frac{k}{2}(e_i-1)+\sum_{j=1}^{k-1}(k-j)e_{i+j}\right),
\end{gather}
where $e_i=d_i-d_{i+1}+\tau(i)$. We prove \eqref{formula:plus dimention} by induction on $N$. It is true for $N=0$. Suppose $N\ge 1$. Consider a point $p\subset\left((\C^2)^{[N]}_{1,k}(H)\right)^T$. Let $D$ be the corresponding Young diagram. We have
$$
d^+_{1,k}(H)=|D|+|\{s\in D|ka(s)\le l(s)<k(a(s)+1)\}|.
$$
There exists a unique point $p\in\left((\C^2)^{[N]}_{1,k}(H)\right)^T$ such that the corresponding Young diagram $D$ satisfies the condition $|\{s\in D|l(s)=k(a(s)+1)\}|=0$. It is equivalent to the fact that for any $i\ge 1$ we have $|\{j\in\Z_{\ge 0}|c_j(D)=i\}|\le k$. Let $D'=D(0,1)$ and $H'=diag^{1,k}(D')$. It is easy to see that
\begin{align*}
&|\{s=(i,j)\in D|j=0, ka(s)\le l(s)<k(a(s)+1)\}|=\\
&\sum_{i=0}^{k-1}(d_{\eta-i}-d_{\eta-i+k})=\sum_{i=0}^{k-1}(k-i)e_{\eta+i}-k.
\end{align*}
Therefore, we have
\begin{align*}
&|\{s\in D|ka(s)\le l(s)<k(a(s)+1)\}|=\\
&|\{s\in D'|ka(s)\le l(s)<k(a(s)+1)\}|+\sum_{i=0}^{k-1}(k-i)e_{\eta+i}-k.
\end{align*}
By the inductive assumption, the right-hand side is equal to
\begin{align*}
&\sum_{i\ge\eta}(e_i-\delta_{i,\eta})\left(\frac{k}{2}(e_i-\delta_{i,\eta}-1)+\sum_{j=1}^{k-1}(k-j)e_{i+j}\right)+\sum_{i=0}^{k-1}(k-i)e_{\eta+i}-k=\\
&\sum_{i\ge\eta}e_i\left(\frac{k}{2}(e_i-1)+\sum_{j=1}^{k-1}(k-j)e_{i+j}\right).
\end{align*}
This completes the proof of the theorem.

\section{Homogeneous nested Hilbert schemes}\label{section:homogeneous nested}

In this section we prove Theorem \ref{theorem:nested}. In section \ref{subsection:quiver} we recall the quiver descriptions of the varieties $(\C^2)^{[n]}_{1,k}(H)$ and $(\C^2)^{[\bn]}_{1,1}(\bH)$. In section \ref{subsection:nested proof} we apply this description to conclude the proof of the theorem.

\subsection{A quiver description}\label{subsection:quiver}

The variety $(\C^2)^{[n]}$ has the following description (see e.g.\cite{Nakajima}).
\begin{gather*}
(\C^2)^{[n]}\cong\left.\left\{(B_1,B_2,i)\left|
\begin{smallmatrix}
1) [B_1,B_2]=0\\
2) \text{(stability) There is no subspace} \\
\text{$S\subsetneq\C^n$ such that $B_{\alpha}(S)\subset S$ ($\alpha=1,2$)}\\
\text{and $im(i)\subset S$} 
\end{smallmatrix}\right.\right\}\right/GL_n(\C),
\end{gather*}
where $B_{\alpha}\in End(\C^n)$ and $i\in Hom(\C,\C^n)$ with the action given by $g\cdot (B_1,B_2,i)=(gB_1g^{-1},gB_2g^{-1},gi)$,
for $g\in GL_n(\C)$.

Let $H=(d_0,d_1,\ldots)$. Let $V_i=\C^{d_i}$. It is easy to see that the variety $(\C^2)^{[n]}_{1,k}(H)$ has the following description (see Figure~\ref{quiver1}).
\begin{multline*}
(\C^2)^{[n]}_{1,k}(H)\cong\\
\cong\left.\left\{\left((B_{1,j},B_{2,j})_{j\ge 0},i\right)\left|
\begin{smallmatrix}
1) B_{1,j+k}B_{2,j}-B_{2,j+1}B_{1,j}=0\\
2) \text{There is no graded subspace} \\
\text{$S\subsetneq\bigoplus_{j\ge 0}V_j$ such that $B_{\alpha}(S)\subset S$}\\
\text{($\alpha=1,2$) and $im(i)\subset S$} 
\end{smallmatrix}\right.\right\}\right/\prod_{j\ge 0}GL_{d_j}(\C),
\end{multline*}
where $B_{1,j}\in Hom(V_j,V_{j+1}), B_{2,j}\in Hom(V_j,V_{j+k})$ and $i\in Hom(\C,V_0)$.

\begin{figure}
$$
\xymatrix{
\C\ar[r]^-i & V_0\ar[r]^-{B_2} \ar[d]^-{B_1} & V_k\ar[r]^-{B_2} \ar[d]^-{B_1} & V_{2k}\ar[r]^-{B_2} \ar[d]^-{B_1} & \\
          & V_1\ar[r]^-{B_2} \ar[d]_-{B_1} & V_{k+1}\ar[r]^-{B_2} \ar[d]_-{B_1} & V_{2k+1}\ar[r]^-{B_2} \ar[d]_-{B_1} & \\
          &  \ar@{.}[rrr] \ar[d]_-{B_1} & \ar[d]_-{B_1} & \ar[d]_-{B_1} & \\
          & V_{k-1}\ar[r]^-{B_2} \ar[ruuu]^-(0.45){B_1} & V_{2k-1}\ar[r]^-{B_2} \ar[ruuu]^-(0.45){B_1} & V_{3k-1}\ar[r]^-{B_2} \ar[ruuu]^-(0.45){B_1} & 
}
$$
\caption{The quiver description of $(\C^2)^{[n]}_{1,k}(H)$}
\label{quiver1}
\end{figure}

Let $\bH=(H_1,\ldots,H_k)$, where $H_i=(d_{i,0},d_{i,1},\ldots)$. Let $V_{i,j}=\C^{d_{i,j}}$. It is easy to see that the variety $(\C^2)^{[\bn]}_{1,1}(\bH)$ has the following description (see Figure~\ref{quiver2}). 
\begin{multline*}
(\C^2)^{[\bn]}_{1,1}(\bH)\cong \\
\shoveleft{\cong\left\{\left.\left((C_{1,j,h},C_{2,j,h})_{\substack{1\le j\le k\\0\le h}},(p_{j,h})_{\substack{1\le j\le k-1\\0\le h}},i\right)\right|\right.}\\
\left.\left.\begin{smallmatrix}
1) C_{1,j,h+1}C_{2,j,h}-C_{2,j,h+1}C_{1,j,h}=0\\
2) C_{\alpha,j+1,h}p_{j,h}-p_{j,h+1}C_{\alpha,j,h}=0\\
3) \text{There is no graded subspace $S\subsetneq\bigoplus_{j,h}V_{j,h}$} \\
\text{such that $B_{\alpha}(S)\subset S$, $p(S)\subset S$ and $im(i)\subset S$} 
\end{smallmatrix}\right\}\right/\prod_{j,h}GL_{d_{j,h}}(\C),
\end{multline*}
where $C_{\alpha,j,h}\in Hom(V_{j,h},V_{j,h+1})$, $p_{j,h}\in Hom(V_{j,h},V_{j+1,h})$ and $i\in Hom(\C,V_{1,0})$.

\begin{figure}
$$
\xymatrix{
\C\ar[r]^-i & V_{1,0}\ar@<0.5ex>[r]^-{C_1}\ar@<-0.5ex>[r]_-{C_2} \ar[d]^-{p_1} & V_{1,1}\ar@<0.5ex>[r]^-{C_1}\ar@<-0.5ex>[r]_-{C_2} \ar[d]^-{p_1} & V_{1,2}\ar@<0.5ex>[r]^-{C_1}\ar@<-0.5ex>[r]_-{C_2} \ar[d]^-{p_1} & \\
          & V_{2,0}\ar@<0.5ex>[r]^-{C_1}\ar@<-0.5ex>[r]_-{C_2} \ar[d]^-{p_2} & V_{2,1}\ar@<0.5ex>[r]^-{C_1}\ar@<-0.5ex>[r]_-{C_2} \ar[d]_-{p_2} & V_{2,2}\ar@<0.5ex>[r]^-{C_1}\ar@<-0.5ex>[r]_-{C_2} \ar[d]_-{p_2} & \\
          & \ar@{.}[rrr] \ar[d]^-{p_{k-1}} & \ar[d]^-{p_{k-1}} & \ar[d]^-{p_{k-1}} & \\
          & V_{k,0}\ar@<0.5ex>[r]^-{C_1}\ar@<-0.5ex>[r]_-{C_2} & V_{k,1}\ar@<0.5ex>[r]^-{C_1}\ar@<-0.5ex>[r]_-{C_2} & V_{k,2}\ar@<0.5ex>[r]^-{C_1}\ar@<-0.5ex>[r]_-{C_2} & 
}
$$
\caption{The quiver description of $(\C^2)^{[\bn]}_{1,1}(\bH)$}
\label{quiver2}
\end{figure}
\subsection{Proof of Theorem \ref{theorem:nested}}\label{subsection:nested proof}
We use the notations from section \ref{main results:nested}.
\begin{proposition}
There is a natural map $\pi\colon(\C^2)^{[n]}_{1,k}(H)\to (\C^2)^{[\bn]}_{1,1}(\bH)$.
\end{proposition}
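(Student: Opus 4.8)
The two varieties in question both carry explicit quiver descriptions (Figures~\ref{quiver1} and~\ref{quiver2}), so the natural strategy is to build $\pi$ directly at the level of quiver data by reindexing the vector spaces. Recall that for $(\C^2)^{[\bn]}_{1,1}(\bH)$ the graded pieces are $V_{j,h}$ for $1\le j\le k$ and $h\ge 0$, while for $(\C^2)^{[n]}_{1,k}(H)$ the graded pieces are $V_m$ for $m\ge 0$, and the definitions from section~\ref{main results:nested} were rigged precisely so that $d_{i+kj}=d_{i+1,j}$ for $0\le i<k$; in other words $V_{j,h}=V_{(j-1)+kh}$ as vector spaces under the identification $m=(j-1)+kh$. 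First I would fix this identification of the underlying graded vector spaces, so that the product of general linear groups $\prod_{j,h}GL_{d_{j,h}}(\C)$ acting on the nested side is literally the same group $\prod_{m\ge 0}GL_{d_m}(\C)$ acting on the $(1,k)$-side.

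Next I would translate the maps. Given a point $\big((B_{1,m},B_{2,m})_{m\ge 0},i\big)$ of $(\C^2)^{[n]}_{1,k}(H)$, I would define the $C$'s and $p$'s by: for $m=(j-1)+kh$ with $1\le j\le k$, set $p_{j,h}:=B_{1,m}\colon V_m\to V_{m+1}$ when $j\le k-1$ (note $m+1=(j)+kh$, matching $V_{j+1,h}$), and set $C_{1,j,h}$ and $C_{2,j,h}$ on $V_{j,h}=V_m\to V_{j,h+1}=V_{m+k}$ using $B_{2,m}$ together with the appropriate composite of the $B_{1,\cdot}$ maps that carries $V_m$ around the quiver one full ``$k$-cycle'' — concretely $C_{2,j,h}=B_{2,m}$ and $C_{1,j,h}=B_{1,m+k-1}\circ B_{1,m+k-2}\circ\cdots\circ B_{1,m}$ (the long horizontal arrow going down $k$ steps then using the wrap-around $B_1$ in Figure~\ref{quiver1}), adjusting indices so the targets land in $V_{m+k}$. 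The vector $i\in Hom(\C,V_0)=Hom(\C,V_{1,0})$ is carried over unchanged. Then I would check the three relations defining $(\C^2)^{[\bn]}_{1,1}(\bH)$: relation 2) ($C_{\alpha,j+1,h}p_{j,h}=p_{j,h+1}C_{\alpha,j,h}$) for $\alpha=1$ is just associativity of composition of the $B_1$'s together with the commutativity relation 1) of the $(1,k)$-quiver, for $\alpha=2$ it is relation 1) $B_{1,m+k}B_{2,m}=B_{2,m+1}B_{1,m}$ applied repeatedly; relation 1) on the nested side ($C_1C_2=C_2C_1$ in the appropriate slots) similarly unwinds to the single relation $B_{1,\cdot+k}B_{2,\cdot}=B_{2,\cdot+1}B_{1,\cdot}$ iterated. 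Finally stability: a graded subspace $S$ on the nested side is exactly a graded subspace on the $(1,k)$-side, and invariance under all the $C$'s, $p$'s and $\mathrm{im}(i)$ is equivalent to invariance under the $B_1$'s, $B_2$'s and $\mathrm{im}(i)$, because the generated subalgebra of endomorphisms is the same — the $C$'s are built from the $B$'s and conversely every $B_1$ appears as some $p$ or as a factor that can be recovered. Since the whole construction is $\prod GL$-equivariant, it descends to a morphism of varieties $\pi$.

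**Main obstacle.** The bookkeeping is the real work: matching the index $m=(j-1)+kh$ consistently across the two figures, being careful about the ``wrap-around'' arrow $B_1\colon V_{k-1+k\ell}\to V_{k\ell+k}$ in Figure~\ref{quiver1} (which is the one that makes $C_1$ go between consecutive $V_{j,h}$ and $V_{j,h+1}$ when $j=k$ versus $j<k$), and verifying that the commutator relations match up after composing $k$ arrows rather than one. I expect no conceptual difficulty — naturality and functoriality are automatic once the dictionary is set — but writing the index shifts so that every $\mathrm{Hom}$-space has the correct source and target, and confirming the stability condition transfers in both directions (so that $\pi$ is well-defined on the GIT quotient, i.e.\ stable points map to stable points), is where care is needed.
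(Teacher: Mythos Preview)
Your approach is correct and coincides with the paper's: the paper identifies $V_{j,h}=V_{j-1+kh}$ and defines $\pi$ by the formula $C_1=B_1^k$, $C_2=B_2$, $p=B_1$, with $i$ unchanged---exactly your dictionary. The paper omits the verification of the relations and of stability that you sketch, so your write-up is simply a more detailed version of the same construction; one small simplification: relation~2) for $\alpha=1$ is pure associativity of the $B_1$-composites and does not need the commutator relation at all.
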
 
\begin{proof}
Clearly, we have $V_{j,h}=V_{j-1+kh}$, for $1\le j\le k, 0\le h$. We define the map $\pi$ by the following formula $\pi\colon(B_1,B_2,i)\mapsto(C_1,C_2,p,i)$, where $C_1=B_1^k,C_2=B_2,p=B_1$. 
\end{proof}
\begin{proposition}
Under the conditions of Theorem \ref{theorem:nested}, the map $\pi$ is an isomorphism.
\end{proposition}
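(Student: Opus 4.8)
The plan is to show that the natural map $\pi$ constructed above is both injective and surjective on closed points (and compatible with the algebraic structure, so actually an isomorphism of varieties), by using the quiver descriptions from section \ref{subsection:quiver}. First I would unwind what needs to be recovered: given $(C_1,C_2,p,i)$ in $(\C^2)^{[\bn]}_{1,1}(\bH)$, I must produce maps $B_{1,j}\in Hom(V_j,V_{j+1})$ and $B_{2,j}\in Hom(V_j,V_{j+k})$ with $B_{1,j+k}B_{2,j}=B_{2,j+1}B_{1,j}$ that are taken to the given data by $\pi$. The assignment $B_2=C_2$, $B_1=p$ on the arrows $V_{j,h}\to V_{j+1,h}$ (i.e.\ $V_{i}\to V_{i+1}$ when $k\nmid i+1$) is forced. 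The only arrows of the first quiver not directly seen in the second are the "wrap-around" maps $B_{1,k-1+kh}\colon V_{k,h}\to V_{1,h+1}$, that is $V_{i}\to V_{i+1}$ when $k\mid i+1$; these must satisfy $C_1=B_1^k = B_{1,k-1+kh}\circ B_{1,k-2+kh}\circ\cdots\circ B_{1,kh}$ along each horizontal strip. So the heart of the matter is: the horizontal composite of the $p$-maps across a strip, together with the data of $C_1$ one strip down, must determine a unique consistent choice of the wrap-around map, and this is where the hypothesis on $E(\bH)$ enters.

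The key steps, in order, would be: (1) Fix notation identifying $V_{j,h}=V_{j-1+kh}$ and record exactly which of the first quiver's arrows coincide with second-quiver arrows and which is the one extra arrow per strip. (2) For the strip indexed by $h$, consider the composite $P_h := p_{k-1,h}\cdots p_{1,h}\colon V_{1,h}\to V_{k,h}$ of the vertical arrows, and observe that the commutation relations $C_{\alpha,j+1,h}p_{j,h}=p_{j,h+1}C_{\alpha,j,h}$ imply $C_{\alpha,k,h}P_h = P_{h+1}C_{\alpha,1,h}$ — i.e.\ $P_h$ intertwines the two "rows" of $C$-maps. The wrap-around arrow we need to build is a map $\beta_h\colon V_{k,h}\to V_{1,h+1}$ with $\beta_h\circ P_h = C_{1,1,h+1}$ realized... wait, more precisely $B_1^k$ restricted to $V_{1,h}$ equals $\beta_h\circ p_{k-1,h}\cdots p_{1,h}$ and this must equal $C_{1,1,h}$; and simultaneously $\beta_h$ must satisfy the remaining instances of relation (1) of the first quiver, namely compatibility with $B_2=C_2$ across the wrap. (3) Show existence and uniqueness of $\beta_h$: uniqueness because, under the hypothesis, the relevant $P_h$ or a related composite is forced to be an isomorphism (or at least have a canonical partial inverse on the relevant subspace), so $\beta_h$ is determined as $C_{1,1,h}\circ P_h^{-1}$; existence because the intertwining and stability force the image/kernel to match up. (4) Conclude that $\pi$ is a bijection on closed points, and since all constructions are given by polynomial (rational, with invertible denominators) formulas compatible with the $GL$-actions, $\pi$ is an isomorphism of varieties; invoke Proposition \ref{proposition:bijection}-style dimension bookkeeping or simply the inverse morphism.

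The main obstacle I expect is step (3): proving that the hypothesis "any two indices $i<j$ not in $E(\bH)$ satisfy $j-i\ge 2$" is exactly what forces the horizontal composite $P_h$ (of the $p$-maps, or the appropriate nested version) to be invertible where needed, so that the wrap-around maps $\beta_h$ are uniquely and polynomially recoverable. Concretely: when consecutive diagonal-indices $i,i+1$ are both "non-$E$" one would get two successive vector-space dimension jumps in a way that breaks the injectivity/surjectivity of the relevant $p$-composite and hence the reconstruction fails or becomes non-unique; ruling this out and showing invertibility in the allowed case will require a careful local analysis of the quiver representation using the stability condition (the cyclic vector $i$) — this is the genuinely technical part. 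Once invertibility of the connecting composites is in hand, building the inverse map $(C_1,C_2,p,i)\mapsto(B_1,B_2,i)$ and checking it lands in $(\C^2)^{[n]}_{1,k}(H)$ (stability transfers because the generated graded subspace is the same) is essentially formal.
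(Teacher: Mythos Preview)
Your plan is essentially the paper's own argument: construct an explicit inverse $\phi$ sending $(C_1,C_2,p,i)$ to $(B_1,B_2,i)$ with $B_2=C_2$, $B_1=p$ on the non-wrap-around arrows, and then manufacture the wrap-around maps $\beta_h\colon V_{k,h}\to V_{1,h+1}$ using invertibility of suitable $p$-composites guaranteed by the hypothesis on $E(\bH)$.

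One sharpening will save you the ``partial inverse'' hedging in step (3). The relevant invertibility statement is cleaner than you suggest: from the stability condition (cyclic generation by $i$) together with the commutation relations, each individual $p_{j,h}$ is surjective, hence an isomorphism precisely when $d_{j,h}=d_{j+1,h}$; in particular $P_h=p_{k-1,h}\cdots p_{1,h}$ is an isomorphism exactly when $h\in E(\bH)$. The hypothesis of Theorem~\ref{theorem:nested} says no two consecutive integers lie outside $E(\bH)$, so for every $h$ at least one of $h$ or $h+1$ lies in $E(\bH)$. This gives two explicit (and, one checks via the intertwining relations, compatible) formulas for $\beta_h$: either $\beta_h=C_{1,1,h}\,p_{1,h}^{-1}\cdots p_{k-1,h}^{-1}$ when $h\in E(\bH)$, or $\beta_h=p_{1,h+1}^{-1}\cdots p_{k-1,h+1}^{-1}\,C_{1,k,h}$ when $h+1\in E(\bH)$. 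With $\beta_h$ in hand the remaining verifications (relation $B_{1}B_2=B_2B_1$ across the wrap, stability, and that $\phi$ is a two-sided inverse to $\pi$ as morphisms of varieties) are indeed formal, exactly as you outline.
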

\begin{proof}
From the stability condition and the commutation relations it follows that the map $p_{j,h}$ is an isomorphism if $d_{j,h}=d_{j+1,h}$. 
Let us define a map $\phi\colon(\C^2)^{[\bn]}_{1,1}(\bH)\to(\C^2)^{[n]}_{1,k}(H)$ by the following formula $\phi\colon(C_1,C_2,p,i)\mapsto(B_1,B_2,i)$, where $B_2=C_2$ and
\begin{gather*}
B_{1,j-1+kh}=\begin{cases}
								p_{j,h},&\text{if $1\le j\le k-1$},\\
								C_{1,1,h}p_{1,h}^{-1}\ldots p_{k-2,h}^{-1}p_{k-1,h}^{-1},&\text{if $j=k$ and $h\in E(\bH)$},\\
								p_{1,h+1}^{-1}\ldots p_{k-2,h+1}^{-1}p_{k-1,h+1}^{-1}C_{1,j,h},&\text{if $j=k$ and $h+1\in E(\bH)$}.
						 \end{cases}
\end{gather*}
Clearly, the map $\phi$ is inverse to $\pi$.
\end{proof}
Theorem \ref{theorem:nested} follows from these two propositions.

\end{document}